\documentclass[10pt,a4paper]{article}

\usepackage{amsmath,amsfonts,amsthm,amssymb,amscd, appendix, verbatim}
\usepackage{xpatch}
\usepackage{hyperref}
\usepackage[utf8]{inputenc}
\usepackage{setspace, color}
\usepackage{array}
\usepackage{cite}

\newcommand{\caixa}{\hglue15.7cm$\square$\vspace{5mm}}

\newtheorem{theorem}{Theorem}[section]
\newtheorem{corollary}[theorem]{Corollary}

\newtheorem{lemma}[theorem]{Lemma}

\theoremstyle{definition}

\newtheorem{remark}[theorem]{Remark}


\makeatletter
   \xpatchcmd{\@thm}{\fontseries\mddefault\upshape}{}{}{} 
\makeatother

\usepackage[margin=1in]{geometry}

\title{Exponential blow-up of mild solutions to the fractional Boussinesq equations in the Gevrey class}
\author{
Wilberclay G. Melo\footnote{Departamento de Matemática, Universidade Federal de Sergipe, São Cristóvão SE 49100-000,
  Brazil, e-mail: wilberclay@gmail.com}\,,
Cilon Perusato\footnote{Departamento de Matemática, Universidade Federal de Pernambuco, Recife PE 50670-901,
  Brazil, e-mail: cilonperusato@gmail.com. This author was partially supported by CNPq grant No. 10444/2022-5.}\,,
 and Thyago S. R. Santos\footnote{Departamento de Matemática,  Instituto de Matemática, Estatística e Computação Científica, Universidade Estadual de Campinas, Campinas SP 13083-859, Brazil e-mail: thyagosr@unicamp.br. This author was partially supported by FAPESP grant No. 2024/15587-1.} \,}

\date{}

\begin{document}

\maketitle

\begin{abstract}
\noindent This work establishes conditions for the existence and uniqueness of local mild solutions to the Boussinesq equations with fractional dissipations in Sobolev-Gevrey spaces. We prove that a unique mild solution exists in an appropriate Sobolev-Gevrey class and analyze its behavior up to the maximal time of existence. In particular, we derive quantitative lower bounds describing how the norm of the solution must blow up as it approaches a finite maximal time. As a corollary, we deduce that the solution exhibits exponential growth.

\end{abstract}

\textbf{Key words:} {\it Boussinesq equations; existence of local mild  solutions; blow-up criteria; Sobolev-Gevrey spaces.}

\textbf{AMS Mathematics Subject Classification:} 35Q30, 35A01, 76D03, 35B44.

\section{Introduction}


\hspace{0.5cm}
In this paper, we consider the following three-dimensional Boussinesq equations:
\begin{equation}\label{micropolar}
\left\{
\begin{array}{l}
u_t
\;\!+\,
u \cdot \nabla u
\,+\,
\nabla \;\!p \:\!\,+\,  \:\!(-\Delta)^\alpha u
\;=\;
\theta e_3, \quad x\in \mathbb{R}^3, \quad t > 0,\\
%
%
%
%
\theta_t
\;\!+\,
u \cdot \nabla \theta\,+\, \,(-\Delta)^\beta \theta
\;=\;
0, \quad x\in \mathbb{R}^3, \quad t > 0,\\
%
%
\mbox{div}\:u  \;=\; 0, \quad x\in \mathbb{R}^3, \quad t > 0,\\
%
%
u(x,0) \,=\, u_0(x),
\;\,
\theta(x,0) \,=\, \theta_0(x), \quad x\in \mathbb{R}^3,
\end{array}
\right.
\end{equation}
where $u(x,t) = \left(u_{1}(x,t), u_{2}(x,t), u_{3}(x,t)\right) \in \mathbb{R}^3$ denotes the incompressible velocity field, and 
$\theta(x,t) \in \mathbb{R}$ represents the temperature of the fluid. The parameters $\alpha$ and $\beta$ belong to 
$(\tfrac{1}{2},\infty)$, and we set $e_{3} = (0,0,1)$. The initial velocity field $u_{0}$ in \eqref{micropolar} is assumed to be divergence-free, that is,
$$
\operatorname{div} u_{0} = 0,
$$
and we denote by $(-\Delta)^{\gamma}$ the fractional Laplacian, defined for a suitable function $f$ by
$$
\mathcal{F}[(-\Delta)^{\gamma} f](\xi) = |\xi|^{2\gamma} \widehat{f}(\xi) .
$$

Observe that, in the case $\theta \equiv 0$ and $\alpha = 1$, the system \eqref{micropolar} reduces to the classical incompressible Navier-Stokes equations
\begin{equation}\label{NS}
\left\{
\begin{array}{l}
u_t
\;\!+\,
u \cdot \nabla u
\,+\,
\nabla \;\!p  
\;=\;
\Delta u, \quad x\in \mathbb{R}^3, \quad t > 0,\\

\mbox{div}\:u  \;=\; 0, \quad x\in \mathbb{R}^3, \quad t > 0.

\end{array}
\right.
\end{equation}

The Boussinesq equations (\ref{micropolar}) are simplified yet powerful models widely used in the study of oceanic and atmospheric dynamics. By incorporating buoyancy effects while filtering out sound waves, they provide a mathematically tractable framework for describing stratified fluid flows under the influence of gravity. These equations also arise in several other areas of Physics, including thermal convection, geophysical fluid dynamics, and plasma modeling; see, for instance, \cite{MR2245751,MR2540168} for more details. 

It is worth noting, however, that although \eqref{micropolar} with fractional dissipation may initially appear to be a purely mathematical generalization, there are geophysical scenarios in which the Boussinesq equations  with a fractional Laplacian (\ref{micropolar}) naturally arise. A typical example occurs in the middle atmosphere, where upward-travelling flows experience changes due to variations in atmospheric properties, even though the incompressibility and Boussinesq approximations remain valid. In this regime, the effects of kinematic and thermal diffusion are attenuated by the thinning of the atmosphere, an anomalous behavior that can be effectively modeled using a spatial fractional Laplacian. See \cite{MR2379269, Gill1982} for further discussions.

There is an extensive literature, in both two and three dimensions, concerning the well-posedness and qualitative properties of the system \eqref{micropolar} in a variety of functional settings (see, for instance, \cite{MR4884564,MR3503190,MR4795499,MR3808344,MR3008326,MR3759571,cilon1} and the references therein).

In this work, we are concerned with the time evolution of solutions. More precisely, we aim to establish finite-time blow-up criteria for mild solutions of \eqref{micropolar} in Sobolev-Gevrey type spaces. For $a \ge 0$, $\sigma \ge 1$, and $s \in \mathbb{R}$, the (homogeneous) Sobolev-Gevrey space is defined by
$$
\dot{H}_{a,\sigma}^s=\dot{H}_{a,\sigma}^s(\mathbb{R}^3)
    = \left\{ f \in \mathcal{S}' : \widehat{f} \in L^1_{\operatorname{loc}}\,\, \text{and} \,\, \ 
       \|f\|_{\dot{H}_{a,\sigma}^s}:=\Big[\int_{\mathbb{R}^3} |\xi|^{2s} e^{2a |\xi|^{1/\sigma}} |\widehat{f}(\xi)|^2\, d\xi\Big]^{\frac{1}{2}} < \infty
      \right\},
$$
where $\widehat{f}$ denotes the Fourier transform
$$
\mathcal{F}(f)(\xi)=\widehat{f}(\xi):=\int_{\mathbb{R}^3}e^{-i\xi\cdot x}f(x)\,dx.
$$
Essentially, $\dot{H}_{a,\sigma}^s \equiv L^{2}\!(|\xi|^{s} e^{a|\xi|^{1/\sigma}}\, d\xi)$ and, in particular, $\dot{H}_{0,\sigma}^s \equiv \dot{H}^s$, \textit{i.e.}, the classical (homogeneous) Sobolev space.  This class of functions plays a crucial role because, according to  Paley--Wiener Theorem (see \cite{HormanderALPDO1}, Chapter~7, for further details), for $\sigma=1$ a function $ f $ belongs to the space $ \dot{H}_{a,1}^s(\mathbb{R}^d) $ if, and only if, it admits a holomorphic extension $ F $ to the strip

$$
S_a = \{ x + i y  \in \mathbb{C}^d: x, y \in \mathbb{R}^d,\ |y| < a \},
$$
such that
$$
    \sup_{|y| < a} \| F(x + i y) \|_{\dot{H}^s} < \infty.
$$
In other words, the parameter $ a \geq 0 $ determines the width of the complex strip to which 
functions in $ \dot{H}_{a,1}^s(\mathbb{R}^d) $ can be analytically extended. For $\sigma>1$, the regularity falls into a non-analytic Gevrey regime, meaning that derivatives grow in a controlled but non-analytic manner, which is equivalently reflected by a subexponential decay at high frequencies.

Consequently, the study of 
differential equations in such analyticity-based function classes has attracted significant attention in recent years. See, for instance, \cite{MR3504420, MR4632081, MR4369830, MR2169876, MR1026858, MR2265624} and references therein for more details on this topic.

The first main result of this work establishes the local well-posedness of the Cauchy problem \eqref{micropolar} in  Sobolev-Gevrey class. Recall that, we say that $(u,\theta)=(u,\theta)(x,t)$ is a mild solution to the Boussinesq equations (\ref{micropolar}) if this application satisfies the associated integral formulations (\ref{wilber15}) and (\ref{wilber16}), which are established via the fractional heat semigroup.

\begin{theorem}\label{teoremaexistenciaB}
Assume that $a>0$, $\sigma>1$, $\alpha>\frac{1}{2}$, $\beta>\frac{1}{2}$ and $0\leq s<\frac{3}{2}$. Let $(u_0,\theta_0)\in \dot{H}_{a,\sigma}^s$ such that $\hbox{\emph{div}}\,u_0=0$. Then,
  there exist an instant $T=T(a,\sigma,s,\alpha,\beta,u_0,\theta_0)>0$ and a unique mild solution $(u,\theta)\in C_T(\dot{H}_{a,\sigma}^s)$ for the Boussinesq equations \emph{(\ref{micropolar})} that satisfies
    \begin{align*}
  &\|(u,\theta)\|_{L^\infty_T(\dot{H}_{a,\sigma}^{s})}
\lesssim \frac{1-T}{T^{1-\frac{1}{2\alpha}}+T^{1-\frac{1}{2\beta}}}.
  \end{align*}
 \end{theorem}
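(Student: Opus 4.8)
The plan is to realize $(u,\theta)$ as a fixed point of the integral map associated with the mild formulation and to run a Banach contraction argument in the complete metric space given by a closed ball of $C_T(\dot H_{a,\sigma}^s)$. Applying the Leray projector $\mathbb P$ onto divergence-free fields to the first equation of \eqref{micropolar} and using Duhamel's principle with the fractional heat semigroups $e^{-t(-\Delta)^\alpha}$ and $e^{-t(-\Delta)^\beta}$, the mild solution must satisfy
\begin{align*}
u(t) &= e^{-t(-\Delta)^\alpha}u_0 - \int_0^t e^{-(t-\tau)(-\Delta)^\alpha}\mathbb P\,\nabla\cdot(u\otimes u)(\tau)\,d\tau + \int_0^t e^{-(t-\tau)(-\Delta)^\alpha}\mathbb P[\theta e_3](\tau)\,d\tau,\\
\theta(t) &= e^{-t(-\Delta)^\beta}\theta_0 - \int_0^t e^{-(t-\tau)(-\Delta)^\beta}\,\nabla\cdot(u\,\theta)(\tau)\,d\tau,
\end{align*}
where I have written the transport terms in divergence form using $\operatorname{div}u=0$. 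I would denote by $\Phi=(\Phi_1,\Phi_2)$ this right-hand side, viewed as a map on $C_T(\dot H_{a,\sigma}^s)$, and seek an instant $T$ and a radius $R$ for which $\Phi$ is a contraction of the ball $\{\|(u,\theta)\|_{L^\infty_T(\dot H_{a,\sigma}^s)}\le R\}$ into itself.

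Next I would record the linear and smoothing bounds on the Fourier side, where the Sobolev-Gevrey norm is a weighted $L^2$ norm with weight $|\xi|^s e^{a|\xi|^{1/\sigma}}$ and the semigroups act as the multipliers $e^{-t|\xi|^{2\alpha}}$ and $e^{-t|\xi|^{2\beta}}$. Boundedness of the linear part is immediate since $0\le e^{-t|\xi|^{2\alpha}}\le 1$, giving $\|e^{-t(-\Delta)^\alpha}u_0\|_{\dot H_{a,\sigma}^s}\le\|u_0\|_{\dot H_{a,\sigma}^s}$. For the Duhamel terms the gain of one derivative from $\nabla\cdot$ is converted into an integrable time singularity via the elementary estimate $\sup_{\xi}|\xi|\,e^{-(t-\tau)|\xi|^{2\alpha}}\lesssim (t-\tau)^{-1/(2\alpha)}$, and likewise with $\beta$. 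Integrating in $\tau$ produces precisely the factors $T^{1-1/(2\alpha)}$ and $T^{1-1/(2\beta)}$, whose finiteness is exactly the reason for the hypotheses $\alpha,\beta>\tfrac12$; the zeroth-order buoyancy coupling $\theta e_3$ carries no derivative and contributes a factor of order $T$.

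The crux of the argument, and the main obstacle, is the bilinear estimate in the weighted space, because the exponential weight does not factor through the convolution that products become under the Fourier transform. The device I would use is the subadditivity of $t\mapsto t^{1/\sigma}$ for $\sigma\ge 1$, which yields
$$
e^{a|\xi|^{1/\sigma}}\le e^{a|\xi-\eta|^{1/\sigma}}\,e^{a|\eta|^{1/\sigma}},\qquad \xi,\eta\in\mathbb R^3.
$$
Inserting this into $\widehat{fg}=\widehat f*\widehat g$ lets the single Gevrey weight be distributed onto both factors, so that after a Cauchy--Schwarz/Young argument the weighted bilinear bound reduces to a purely Sobolev product estimate in $\dot H^s(\mathbb R^3)$. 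The convergence of the resulting frequency integral is governed by the condition $0\le s<\tfrac32$ in dimension three, which is exactly the range in which the relevant product law holds, so that the nonlinearities are controlled by $\|u\|_{\dot H_{a,\sigma}^s}\,\|(u,\theta)\|_{\dot H_{a,\sigma}^s}$ up to the derivative loss already absorbed by the semigroup.

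Combining the linear bound with the bilinear estimate gives, for $(u,\theta)$ in the ball of radius $R$,
$$
\|\Phi(u,\theta)\|_{L^\infty_T(\dot H_{a,\sigma}^s)}\le C\|(u_0,\theta_0)\|_{\dot H_{a,\sigma}^s} + C\big(T^{1-\frac{1}{2\alpha}}+T^{1-\frac{1}{2\beta}}\big)R^2 + CT R,
$$
together with an analogous Lipschitz estimate for $\Phi(u,\theta)-\Phi(\tilde u,\tilde\theta)$ carrying a linear factor of $R$. Choosing $T<1$ small and then selecting the radius $R$ so that both the self-mapping and the contraction conditions hold simultaneously forces $R$ to be comparable to $(1-T)\big(T^{1-\frac{1}{2\alpha}}+T^{1-\frac{1}{2\beta}}\big)^{-1}$; since the unique fixed point lies in this ball, the stated quantitative bound follows. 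Continuity in time and uniqueness then come directly from the contraction, completing the proof.
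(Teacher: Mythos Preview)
Your approach is essentially the paper's: mild formulation, semigroup smoothing $|\xi|e^{-(t-\tau)|\xi|^{2\gamma}}\lesssim (t-\tau)^{-1/(2\gamma)}$, a product estimate in $\dot H_{a,\sigma}^s$, and a fixed-point argument. The only structural difference is that the paper invokes an abstract fixed-point lemma for equations of the form $x=x_0+B(x,x)+L(x)$ (Lemma~\ref{pontofixo}), which packages the self-map and contraction conditions and outputs directly the bound $\|x\|\le (1-C_1)/(2C_2)$, whereas you run the contraction by hand; the two are interchangeable.

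There is, however, one genuine inaccuracy in your sketch of the bilinear step. You write that after distributing the Gevrey weight via subadditivity the estimate ``reduces to a purely Sobolev product estimate in $\dot H^s(\mathbb R^3)$'' in the range $0\le s<\tfrac32$. That is not correct: $\dot H^s(\mathbb R^3)$ is \emph{not} an algebra for $s<\tfrac32$, and no Sobolev-only inequality of the form $\|fg\|_{\dot H^s}\lesssim\|f\|_{\dot H^s}\|g\|_{\dot H^s}$ is available in this range. The algebra property $\|fg\|_{\dot H_{a,\sigma}^s}\le C_{a,\sigma,s}\|f\|_{\dot H_{a,\sigma}^s}\|g\|_{\dot H_{a,\sigma}^s}$ (Lemma~\ref{lemalorenz}) holds precisely because $a>0$ and $\sigma>1$: after the splitting $e^{a|\xi|^{1/\sigma}}\le e^{a|\xi-\eta|^{1/\sigma}}e^{a|\eta|^{1/\sigma}}$ one still needs the residual exponential decay of each factor to convert an $L^2$ bound into the $L^1$ bound required by Young's inequality, and this is where the constant picks up its dependence on $a$ and $\sigma$. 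The paper's own remark emphasizes this point (it is exactly why the case $a=0$ is excluded). Your argument is salvageable, but the reduction is to Lemma~\ref{lemalorenz}, not to a pure $\dot H^s$ product law.
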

The proof of this result relies on a fixed-point argument. However, for both the classical and the fractional Boussinesq equations, a major difficulty arises in handling the coupling terms $\theta e_{3}$ and $(u \cdot \nabla)\theta$, which demand delicate estimates within the chosen functional framework.

Next, we investigate the qualitative behavior of the local solution provided by Theorem \ref{teoremaexistenciaB}, assuming that its maximal lifespan is finite.

\begin{theorem}\label{teoremaB}
Assume that $a>0$, $\sigma>1$, $\alpha\geq 1, \beta \geq 1, 0\leq s<\frac{3}{2}$ and $n\in \mathbb{N}$. Let $(u_0,\theta_0)\in \dot{H}_{a,\sigma}^s$ such that $\hbox{\emph{div}}\,u_0=0$. Consider that $(u,\theta)\in C([0,T^*);\dot{H}_{a,\sigma}^s)$ is the  solution
for the Boussinesq equations \emph{(\ref{micropolar})} in the maximal time interval $0\leq t < T^*$  given in Theorem \emph{\ref{teoremaexistenciaB}}. If $T^*<\infty$, then the following statements hold:
      \begin{enumerate}
  \item[\textbf{\emph{i)}}]  $\displaystyle \limsup_{t\nearrow T^*} \|(u,\theta)(t)\|_{\dot{H}_{\frac{a}{(\sqrt{\sigma})^{(n-1)}},\sigma}^s}=+\infty$;
  \item[\textbf{\emph{ii)}}] $\displaystyle\int_{t}^{T^*} [\|e^{\frac{a}{\sigma(\sqrt{\sigma})^{(n-1)}}|\cdot|^{\frac{1}{\sigma}}}(\widehat{u},\widehat{\theta})(\tau)\|_{L^1}
^{\frac{2\alpha}{2\alpha-1}}+\|e^{\frac{a}{\sigma(\sqrt{\sigma})^{(n-1)}}|\cdot|^{\frac{1}{\sigma}}}(\widehat{u},\widehat{\theta})(\tau)\|_{L^1}^{\frac{2\beta}{2\beta-1}}]\,d\tau=\infty$;
 \item[\textbf{\emph{iii)}}]  $\displaystyle \|e^{\frac{a}{\sigma(\sqrt{\sigma})^{(n-1)}}|\cdot|^{\frac{1}{\sigma}}}(\widehat{u},\widehat{\theta})(t)\|_{L^1}^{\frac{2\alpha}{2\alpha-1}}
    +\|e^{\frac{a}{\sigma(\sqrt{\sigma})^{(n-1)}}|\cdot|^{\frac{1}{\sigma}}}(\widehat{u},\widehat{\theta})(t)\|_{L^1}^{\frac{2\beta}{2\beta-1}}\geq  (e^{C(T^*-t)}-1)^{-1};$
  \item [\textbf{\emph{iv)}}] $\displaystyle \|(u,\theta)(t)\|_{\dot{H}_{\frac{a}{(\sqrt{\sigma})^n},\sigma}^s}^{\frac{2\alpha}{2\alpha-1}}+
\|(u,\theta)(t)\|_{\dot{H}_{\frac{a}{(\sqrt{\sigma})^n},\sigma}^s}^{\frac{2\beta}{2\beta-1}}\gtrsim(e^{C(T^{*}-t)}-1)^{-1}.$
  \end{enumerate}
for all $t\in [0,T^{*})$ and $C>0$ is a positive constant.
\end{theorem}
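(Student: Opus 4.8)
The plan is to deduce all four items from two differential inequalities: an energy inequality for the Sobolev--Gevrey norm of $(u,\theta)$ and a Riccati-type inequality for the Wiener-algebra quantity
\[
L_b(t):=\|e^{b|\cdot|^{1/\sigma}}(\widehat u,\widehat\theta)(t)\|_{L^1},\qquad M_b(t):=L_b(t)^{\frac{2\alpha}{2\alpha-1}}+L_b(t)^{\frac{2\beta}{2\beta-1}},
\]
evaluated at the two radii $r_1:=a(\sqrt\sigma)^{-(n-1)}$ and $\rho:=r_1/\sigma=a\,\sigma^{-1}(\sqrt\sigma)^{-(n-1)}$. Since $r_1\le a$, the solution of Theorem~\ref{teoremaexistenciaB} lies in $C([0,T^*);\dot{H}_{r_1,\sigma}^{s})$, so I may run the whole argument at radius $r_1$; the index $n$ only selects one member of a family of radii $\le a$, and no induction on $n$ is required.

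First I would establish the energy inequality. Applying $|\xi|^{s}e^{r_1|\xi|^{1/\sigma}}$ to \eqref{micropolar}, pairing in $L^2$ and using $\operatorname{div}u=0$ to annihilate the pressure, the dissipations produce $\|u\|^2_{\dot{H}_{r_1,\sigma}^{s+\alpha}}+\|\theta\|^2_{\dot{H}_{r_1,\sigma}^{s+\beta}}$, while the buoyancy term $\theta e_3$ is linear and controlled by $\tfrac12\|(u,\theta)\|^2_{\dot{H}_{r_1,\sigma}^{s}}$. The advective terms $u\cdot\nabla u$ and $u\cdot\nabla\theta$ are treated by a Gevrey product estimate whose crucial ingredient is a refinement of subadditivity: arranging the convolution so that $\eta$ is the lower frequency ($|\eta|\le|\xi-\eta|$), concavity of $t\mapsto t^{1/\sigma}$ yields $|\xi|^{1/\sigma}\le|\xi-\eta|^{1/\sigma}+\tfrac1\sigma|\eta|^{1/\sigma}$, so the factor that becomes the $L^1$ norm carries only the reduced weight $e^{\rho|\eta|^{1/\sigma}}$ with $\rho=r_1/\sigma$ — this is precisely where the drop by $\sigma$ (hence the $\sqrt\sigma$ pattern of radii) originates. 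This gives $|\langle u\cdot\nabla u,u\rangle_{\dot{H}_{r_1,\sigma}^{s}}|\lesssim L_\rho\,\|u\|_{\dot{H}_{r_1,\sigma}^{s+1}}\|u\|_{\dot{H}_{r_1,\sigma}^{s}}$; interpolating $\dot{H}_{r_1,\sigma}^{s+1}$ between $\dot{H}_{r_1,\sigma}^{s}$ and $\dot{H}_{r_1,\sigma}^{s+\alpha}$ (this is exactly where $\alpha,\beta\ge1$ enters) and applying Young's inequality with conjugate exponents $2\alpha$ and $\tfrac{2\alpha}{2\alpha-1}$ absorbs the dissipation and generates the exponent $\tfrac{2\alpha}{2\alpha-1}$. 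Summing both equations I obtain, for some $C>0$, the inequality $\tfrac{d}{dt}\|(u,\theta)\|^2_{\dot{H}_{r_1,\sigma}^{s}}\le C(1+M_\rho)\|(u,\theta)\|^2_{\dot{H}_{r_1,\sigma}^{s}}$, which I denote (E). Statement ii) then follows by contradiction: if $\int_t^{T^*}M_\rho<\infty$, Gronwall in (E) keeps the norm and the time-integrated dissipation bounded up to $T^*$, so the local theory restarts and, by the Gevrey smoothing of $e^{-t(-\Delta)^\alpha}$ which recovers the full radius $a$ for $t>0$, extends the solution past $T^*$, contradicting maximality. Statement i) is its contrapositive read through $L_\rho\lesssim\|(u,\theta)\|_{\dot{H}_{r_1,\sigma}^{s}}$ (Cauchy--Schwarz, convergent because $\rho<r_1$ and $s<\tfrac32$): a finite $\limsup$ of the $\dot{H}_{r_1,\sigma}^{s}$ norm would bound $M_\rho$ and contradict ii).

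For iii) I would work directly on the Fourier side. From $\partial_t\widehat u=-|\xi|^{2\alpha}\widehat u+\widehat N$ one gets $\partial_t|\widehat u|\le-|\xi|^{2\alpha}|\widehat u|+|\widehat N|$; multiplying by $e^{\rho|\xi|^{1/\sigma}}$, integrating in $\xi$, and bounding the advective part by $L_\rho\int|\xi|e^{\rho|\xi|^{1/\sigma}}|\widehat u|\,d\xi$, the extra derivative $|\xi|$ is absorbed into $\int|\xi|^{2\alpha}e^{\rho|\xi|^{1/\sigma}}|\widehat u|\,d\xi$ by Young's inequality with parameter $\varepsilon\sim L_\rho^{-1}$, whose leftover is exactly $\lesssim L_\rho^{\,1+\frac{2\alpha}{2\alpha-1}}$ (and $L_\rho^{\,1+\frac{2\beta}{2\beta-1}}$ from the temperature). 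This produces the Riccati inequality $M_\rho'(t)\le C\,M_\rho(t)(1+M_\rho(t))$, call it (R). Since $\int^{T^*}M_\rho=\infty$ forces $\limsup_{t\to T^*}M_\rho=\infty$, integrating (R) in the form $\tfrac{d}{dt}\ln\frac{M_\rho}{1+M_\rho}\le C$ from $t$ up to a sequence along which $M_\rho\to\infty$ gives $M_\rho(t)\ge(e^{C(T^*-t)}-1)^{-1}$, which is iii). Finally iv) is immediate: the same Cauchy--Schwarz bound at the larger radius $r_1/\sqrt\sigma>\rho$ gives $L_\rho\lesssim\|(u,\theta)\|_{\dot{H}_{r_1/\sqrt\sigma,\sigma}^{s}}$, and raising to the two powers and summing transfers the lower bound from $M_\rho$ to the $\dot{H}_{a(\sqrt\sigma)^{-n},\sigma}^{s}$ norm.

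The hard part will be the energy estimate (E): making the radius drop $r_1\mapsto\rho=r_1/\sigma$ rigorous requires splitting the convolution according to which frequency dominates and applying the refined subadditivity symmetrically in both bilinear terms, all while the coupling $\theta e_3\leftrightarrow u\cdot\nabla\theta$ entangles the velocity and temperature estimates — the same difficulty already flagged for Theorem~\ref{teoremaexistenciaB}; the interpolation step is also what pins down the hypothesis $\alpha,\beta\ge1$. The secondary delicate point is the continuation principle underlying i)--ii), which relies on the Gevrey smoothing of the fractional heat flow to upgrade boundedness at radius $r_1$ into solvability at the full radius $a$.
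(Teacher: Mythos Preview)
Your scheme for $n=1$ is essentially the paper's, with two reorderings. You deduce ii) first and read i) as its contrapositive via $L_\rho\lesssim\|(u,\theta)\|_{\dot H^s_{r_1,\sigma}}$; the paper proves i) first by a detailed Cauchy-sequence argument on the mild formulation (the terms $I_1,I_2,I_3$ around \eqref{I1}--\eqref{wilber23}) and then obtains ii) from Gronwall applied to \eqref{wilber4}. For iii) you separate variables in the clean form $\tfrac{d}{dt}\ln\tfrac{M_\rho}{1+M_\rho}\le C$; the paper reaches the same endpoint through an integral Gronwall step \eqref{wilber28} followed by a rewriting \eqref{wilber29} and one more integration. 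Your identification of the radius drop $r_1\mapsto r_1/\sigma$ in the bilinear estimate is exactly the content of Lemma~\ref{lemanovow2}~i), and the Cauchy--Schwarz passage from $L_\rho$ to $\dot H^s_{r_1/\sqrt\sigma,\sigma}$ for iv) is the paper's too. What you call ``the hard part'' (the product estimate (E)) is in this paper a one-line citation of Lemma~\ref{lemanovow2}; the actual labor for i) sits in the continuation step you summarize as ``the local theory restarts''.

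The real gap is your handling of $n>1$. You claim no induction is needed because one can work directly at $r_1=a(\sqrt\sigma)^{-(n-1)}$ and then invoke ``Gevrey smoothing of the fractional heat flow'' to pass from boundedness in $\dot H^s_{r_1,\sigma}$ back up to extendability in $\dot H^s_{a,\sigma}$. But $T^*$ in the statement is the maximal time at radius $a$; if your contradiction argument only produces a continuation at the smaller radius $r_1$, you have shown $T^*_{r_1}>T^*_a$, which contradicts nothing. Closing this requires either a \emph{nonlinear} Gevrey-smoothing theorem for the Boussinesq system (Foias--Temam type, not the linear semigroup estimate), which is nowhere among the paper's tools and would be a substantial separate result, or precisely the inductive mechanism the paper actually uses: item iv) at level $n$ forces blow-up of the $\dot H^s_{a(\sqrt\sigma)^{-n},\sigma}$ norm, which is i) at level $n+1$, and this in turn yields $T^*_{a(\sqrt\sigma)^{-n}}=T^*_a$ (see \eqref{wilber10}--\eqref{wilber11}); one then reruns ii)--iv) at the new radius. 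What you flag as the ``secondary delicate point'' is in fact the crux for $n>1$, and the paper's induction on $n$ is exactly what substitutes for the smoothing you assume.
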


 As a corollary, we obtain that the local solutions provided by Theorem \ref{teoremaexistenciaB} exhibit exponential growth, which in turn ensures finite-time blow-up with an explicit exponential rate.

\begin{corollary}\label{corolario}
Assume that $a>0$, $\sigma>1$, $\alpha=\beta\geq 1 \hbox{  and  } 0\leq s<\frac{3}{2}$. Let $(u_0,\theta_0)\in \dot{H}_{a,\sigma}^s\cap L^2$ such that $\hbox{\emph{div}}\,u_0=0$. Consider that $(u,\theta)\in C([0,T^*);\dot{H}_{a,\sigma}^s)$ is the  solution
for the Boussinesq equations \emph{(\ref{micropolar})} in the maximal time interval $0\leq t < T^*$  given in Theorem \emph{\ref{teoremaexistenciaB}}. If $T^*<\infty$, then the following statement holds:
   $$
   \displaystyle
\|(u,\theta)(t)\|_{\dot{H}_{a,\sigma}^s} \gtrsim  {(e^{C_1(T^*-t)}-1)^{\varrho_1}}
\exp\{C_2(e^{C_1(T^*-t)}-1)^{\varrho_2}\},
$$
for all \(t \in [0, T^{*})\), where $\varrho_1 := \frac{(1 - 2\alpha)\,[\,2(s\sigma + \sigma_{0}) + 1\,]}{6\alpha\sigma}<0$, $\varrho_2:=\frac{1-2\alpha}{3\alpha\sigma}<0$ and \(2\sigma_{0}\) denotes the integer part of \(2\sigma\mu\) \emph{(}with \(\mu > \tfrac{3}{2}\)\emph{)}.  
Here, $C_1=C_1(\alpha)$ and $C_2=C_2(a,\alpha,\sigma,s,u_0,\theta_0,T^*)$ are constants. In particular
$$
\lim_{t \nearrow T^*}\|(u,\theta)(t)\|_{\dot{H}_{a,\sigma}^s} = +\infty,
$$
exponentially.
\end{corollary}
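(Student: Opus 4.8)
The plan is to combine three ingredients: an $L^2$ energy bound that controls the low frequencies of the solution, the blow-up of the \emph{unweighted} $L^1$ norm of the Fourier transform extracted from Theorem \ref{teoremaB}, and a frequency-splitting interpolation which converts the algebraic blow-up of a low-regularity quantity into the exponential blow-up of the full Sobolev--Gevrey norm.

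First I would establish a uniform energy bound. Pairing the velocity equation with $u$ and the temperature equation with $\theta$ in $L^2$, the incompressibility condition annihilates the convective and pressure terms, and after estimating the buoyancy coupling $\int \theta u_3$ by Cauchy--Schwarz one is left with $\frac{d}{dt}(\|u\|_{L^2}^2+\|\theta\|_{L^2}^2)\le \|u\|_{L^2}^2+\|\theta\|_{L^2}^2$. The Gr\"onwall inequality then yields $\|(u,\theta)(t)\|_{L^2}\le M_0$ for all $t\in[0,T^*)$, with $M_0=M_0(u_0,\theta_0,T^*)$; this is precisely where the extra hypothesis $(u_0,\theta_0)\in L^2$ enters.

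Next I would extract an unweighted lower bound from Theorem \ref{teoremaB}(iii). Since $\alpha=\beta$ the two summands there coincide, so (iii) reads $\|e^{\frac{a}{\sigma(\sqrt\sigma)^{n-1}}|\cdot|^{1/\sigma}}(\widehat u,\widehat\theta)(t)\|_{L^1}\gtrsim(e^{C(T^*-t)}-1)^{-\gamma}$ for every $n$, where $\gamma:=\frac{2\alpha-1}{2\alpha}$. Because $f(t)\in\dot H^s_{a,\sigma}$ with $s<\tfrac32$ forces $\widehat f(t)\in L^1$, and the weights decrease monotonically to $1$ as $n\to\infty$, provided the constant in (iii) is uniform in $n$ (depending only on $\alpha$) monotone convergence lets me pass to the limit and obtain $\|(\widehat u,\widehat\theta)(t)\|_{L^1}\gtrsim(e^{C_1(T^*-t)}-1)^{-\gamma}=:\ell(t)$, with $C_1=C_1(\alpha)$. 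This limiting step is the conceptual crux: retaining any fixed positive Gevrey weight at this stage would ultimately produce only an algebraic (power-type) lower bound for $\|(u,\theta)(t)\|_{\dot H^s_{a,\sigma}}$, whereas it is the unweighted norm that generates the exponential.

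The heart of the argument is then an interpolation inequality for $\|\widehat f\|_{L^1}$ in terms of $\|f\|_{L^2}$ and $N:=\|f\|_{\dot H^s_{a,\sigma}}$. Splitting the frequency integral at a radius $R$, I would bound the low part by $\|\widehat f\|_{L^1(|\xi|\le R)}\lesssim R^{3/2}\|f\|_{L^2}$ (Cauchy--Schwarz and Plancherel) and the high part by Cauchy--Schwarz against the Gevrey weight, $\|\widehat f\|_{L^1(|\xi|>R)}\lesssim N\,(\int_{|\xi|>R}|\xi|^{-2s}e^{-2a|\xi|^{1/\sigma}}\,d\xi)^{1/2}$, whose tail decays like $R^{\lambda}e^{-aR^{1/\sigma}}$ by a Laplace/Watson asymptotic. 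Choosing $R=R(t)\sim(\ell(t)/M_0)^{2/3}$, so that the low-frequency contribution absorbs half of $\|\widehat f\|_{L^1}$, and then invoking the lower bound $\|\widehat f\|_{L^1}\gtrsim\ell$, the high-frequency estimate forces $N\gtrsim \ell\,R^{-\lambda}e^{aR^{1/\sigma}}\gtrsim \ell^{\,|\varrho_1|/\gamma}\exp\{c\,\ell^{2/(3\sigma)}\}$. The exponential appears precisely because the energy bound pins the Fourier mass at frequencies of size $\sim\ell^{2/3}$, where the Gevrey weight $e^{a|\xi|^{1/\sigma}}$ is already exponentially large; substituting $\ell(t)\gtrsim(e^{C_1(T^*-t)}-1)^{-\gamma}$ and simplifying gives the stated bound, and letting $t\nearrow T^*$ (so $e^{C_1(T^*-t)}-1\to0^+$ and both negative powers diverge) yields the exponential blow-up. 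I expect the main technical obstacle to be pinning down the exact prefactor exponent $\varrho_1$: reproducing the term $2(s\sigma+\sigma_0)+1$, with $2\sigma_0=\lfloor 2\sigma\mu\rfloor$ and $\mu>\tfrac32$, requires carrying the sharp polynomial corrections in the tail asymptotic (rather than the crude $R^{\lambda}$ above) and, most likely, replacing the low-frequency $L^2$ estimate by a sharper homogeneous Sobolev bound of order $\mu$, whereas the exponent $\varrho_2=\frac{1-2\alpha}{3\alpha\sigma}$ inside the exponential is robust and already emerges from the balancing $R\sim\ell^{2/3}$.
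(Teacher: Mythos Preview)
Your proof is correct in substance and shares with the paper its first two steps: the $L^2$ energy bound and the passage, by dominated convergence on Theorem~\ref{teoremaB}(iii) (the constant there depends only on $\alpha,\beta$, not on $n$), to the unweighted lower bound $\|(\widehat u,\widehat\theta)(t)\|_{L^1}\gtrsim(e^{C_1(T^*-t)}-1)^{-\gamma}$. Where you diverge is in the interpolation step. You split frequencies at a single optimized radius $R$ and invoke the Laplace asymptotic of $\int_{|\xi|>R}|\xi|^{-2s}e^{-2a|\xi|^{1/\sigma}}d\xi$; the paper instead applies Lemma~\ref{benameur1}, $\|\widehat f\|_{L^1}\lesssim\|f\|_{L^2}^{1-3/(2\delta)}\|f\|_{\dot H^\delta}^{3/(2\delta)}$, at every level $\delta=s+\tfrac{k}{2\sigma}$, solves for $\|f\|_{\dot H^{s+k/(2\sigma)}}^2$, multiplies by $(2a)^k/k!$, and sums over $k\ge 2\sigma_0+1$ against the Taylor expansion of $e^{2a|\xi|^{1/\sigma}}$ to recover $\|f\|_{\dot H^s_{a,\sigma}}^2$. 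Both routes yield exactly the exponent $\varrho_2=\tfrac{1-2\alpha}{3\alpha\sigma}$ inside the exponential, which is what drives the blow-up. Your route is more self-contained and avoids the auxiliary parameter $\mu$, but it produces the prefactor exponent $\tfrac{(1-2\alpha)(2s\sigma+1)}{6\alpha\sigma}$ rather than the stated $\varrho_1$: the additional $2\sigma_0$ in the paper's $\varrho_1$ is not a consequence of a sharper low-frequency bound, as you speculate, but an artefact of where the series must be truncated (one starts at $k\ge 2\sigma\mu$ so that the constant $C_\delta$ in Lemma~\ref{benameur1} remains uniform). Thus your argument proves the ``in particular'' exponential blow-up and a lower bound of the same shape, but reproducing the exact $\varrho_1$ in the statement requires the paper's series-summation mechanism.
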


\bigskip

\begin{remark}
Let us point out  relevant  improvements presented by our main  results.
\begin{enumerate}
  \item \underline{Solution spaces}: In this text, we have approached Gevrey classes as solution spaces. Thus, we have decided to compare our contributions to some results established by R.~Guterres, W.~G. Melo, N.~F. Rocha and T.~S.~R. Santos \cite{robert} (see also \cite{wilberr2,wilberr3,wilberr4,wilberr5,wilberr6,wilberr7,MR3504420,MR4632081,MR2169876,MR4369830} and references therein), which ones are similar to ours. First of all, Theorem \ref{teoremaexistenciaB} presents weaker regularities for the mild solutions than Theorem $1.1$ obtained in \cite{robert} (check also Theorem $1.4$ in \cite{robert}, which is related to Theorem \ref{teoremaB} and Corollary \ref{corolario}). This fact occurs because we have applied Lemma \ref{lemalorenz} instead of Lemma $2.3$  presented in \cite{robert}, and have worked in a slightly different way as well  (see (\ref{wilber17}) and (\ref{w1}) below for more details). However, this choice led us to not consider the usual homogenous Sobolev spaces (see the proof of Lemma $2.9$ in \cite{lorenz}). In addition, at last, it is worth to notice that $s<\frac{3}{2}$ is our assumption due to the completeness of Sobolev-Gevrey space (see also Lemma \ref{lemalorenz}) and, furthermore, $\alpha,\beta\geq 1$ in Theorem \ref{teoremaB} and Corollary \ref{corolario} because of the use of Lemma \ref{lemanovow1} (it is a technical issue).   
  \item \underline{Boussinesq equations}: The term $\theta e_3$ given in (\ref{micropolar}) is the main mathematical difference between the Boussinesq equations and the generalized MHD equations studied by \cite{robert} (and, consequently, the Navier-Stokes equations (\ref{NS})) in this paper. Because of this term, we have made the use of a different fixed point theorem (see Lemma \ref{pontofixo} and Lemma $2.1$ in \cite{lorenz}). More specifically, we were supposed to add new equalities and estimates  in order to obtain our conclusions (see, for example, (\ref{linearL}), (\ref{linearL1eL2})--(\ref{lwfinal}), (\ref{3}), (\ref{esqueci}), (\ref{wilber19})--(\ref{wilber23}), (\ref{wilber4}), (\ref{wilber24}), (\ref{wilber25}), (\ref{wilber26})--(\ref{wilber9}), (\ref{i)n=1}), (\ref{wilber32}), (\ref{normal2}) and (\ref{wilber33})).
  \item \underline{New lower bounds}: As a consequence of the results discussed in the last item above, we have proved new blow-up criteria for local mild solutions of the Boussinesq equations (\ref{micropolar}) as, for instance, Theorem \ref{teoremaB} \textbf{iii)} and \textbf{iv)}, and Corollary \ref{corolario} (compare these results to Theorem $1.4$ iii), iv) and v) in \cite{robert}). Lastly, it is important to point out that other information has been showed in this work, see Theorem \ref{teoremaexistenciaB} and Theorem \ref{teoremaB} \textbf{i)} and \textbf{ii)}.
\end{enumerate}

\end{remark}

\bigskip

\noindent\textbf{Notations. }For  $(X,\|\cdot\|_{X})$ be a normed space and $I \subset \mathbb{R}$ an interval. We define  
$$
C(I;X)=\{f:I\to X \text{ continuous}\}, \qquad 
\|f\|_{L^{\infty}(I;X)}=\sup_{t\in I}\|f(t)\|_{X},
$$
and for $T>0$, we  write $C_{T}(X)=C([0,T];X)$. For $1 \le p < \infty$, the Bochner space $L^{p}(I;X)$ is equipped with the norm  
$$
\|f\|_{L^{p}(I;X)}=\left(\int_{I}\|f(t)\|_{X}^{p}\,dt\right)^{1/p},
$$
and we denote $L_{T}^{p}(X)=L^{p}([0,T];X)$ with$\footnote{Here, $\|(f,g)\|_{L^1}:=\|f\|_{L^1}+\|g\|_{L^1}$  and $\|(f,g)\|_{\dot{H}_{a,\sigma}^s}:=[\|f\|^{2}_{\dot{H}_{a,\sigma}^s}+\|g\|^{2}_{\dot{H}_{a,\sigma}^s}]^{\frac{1}{2}}.$}$ $1\leq p \leq \infty$. Furthermore, for the functions $f$ and $g=(g_1,g_2,g_3)$, the tensor product is  defined by $f \otimes g := (g_{1}f, g_{2}f, g_{3}f)$.

\bigskip

\noindent\textbf{Organization of the paper.}
The structure of the paper is as follows. 
In Section~\ref{secaonotacoes}, we present the auxiliary lemmas and technical tools required for the proofs of our main results. 
Section~$3$ is devoted to the proofs of Theorems~\ref{teoremaexistenciaB} and~\ref{teoremaB}, as well as Corollary~\ref{corolario}, each of which is addressed in a separate subsection.

\section{Preliminary lemmas }\label{secaonotacoes}

\hspace{0.5cm} This section is devoted to the auxiliary lemmas and tools employed throughout this work. We begin by presenting the preliminary lemmas that play a central role in the proofs of our main results.

First, we list the lemmas that will be used in the proof of Theorem~\ref{teoremaexistenciaB}.

	\begin{lemma}[see \cite{lorenz}]\label{calculo}
		Let $a,b>0$. Then, $\displaystyle\lambda^ae^{-b\lambda}\leq a^a(eb)^{-a}$ for all $\lambda >0.$
	\end{lemma}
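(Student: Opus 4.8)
The plan is to treat the left-hand side as a function of the single variable $\lambda$ and locate its maximum over $(0,\infty)$ by elementary calculus. First I would set $g(\lambda):=\lambda^a e^{-b\lambda}$ and note that $g$ is smooth and strictly positive on $(0,\infty)$, with $g(\lambda)\to 0$ both as $\lambda\to 0^+$ and as $\lambda\to\infty$. Hence $g$ attains a global maximum at some interior point, which must be a critical point.

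Next I would differentiate, obtaining
$$
g'(\lambda)=\lambda^{a-1}e^{-b\lambda}\,(a-b\lambda).
$$
Since $\lambda^{a-1}e^{-b\lambda}>0$ for every $\lambda>0$, the unique critical point is $\lambda_*=a/b$. Because $g'>0$ on $(0,a/b)$ and $g'<0$ on $(a/b,\infty)$, the point $\lambda_*$ is indeed the global maximizer. Evaluating $g$ there yields
$$
g\!\left(\tfrac{a}{b}\right)=\Big(\tfrac{a}{b}\Big)^a e^{-a}=a^a b^{-a}e^{-a}=a^a(eb)^{-a},
$$
which gives precisely the claimed bound $\lambda^a e^{-b\lambda}\le a^a(eb)^{-a}$ for all $\lambda>0$.

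Alternatively, I could first reduce to the normalized case $b=1$ through the substitution $\mu=b\lambda$, rewriting the inequality as $\mu^a e^{-\mu}\le a^a e^{-a}$; this isolates the essential content, namely that $\mu\mapsto\mu^a e^{-\mu}$ peaks at $\mu=a$. There is essentially no serious obstacle in this argument: the only point deserving a word of care is confirming that the unique interior critical point is a \emph{global} (rather than merely local) maximum. This follows at once from the sign change of $g'$ across $\lambda_*$, or equivalently from the strict concavity of $\log g(\lambda)=a\log\lambda-b\lambda$, whose second derivative $-a/\lambda^2$ is negative throughout $(0,\infty)$.
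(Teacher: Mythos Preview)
Your argument is correct and complete: maximizing $g(\lambda)=\lambda^a e^{-b\lambda}$ via its derivative (or via the concavity of $\log g$) is the standard elementary route, and the evaluation at $\lambda_*=a/b$ gives exactly the stated bound. The paper itself does not supply a proof but simply cites Lemma~2.10 of \cite{lorenz}, so there is no alternative approach to compare against; your proof is precisely what one would expect that reference to contain.
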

	\begin{proof}
		For  more details, see Lemma $2.10$ in \cite{lorenz} (and references therein).
\end{proof}

\begin{lemma}[see \cite{lorenz}]\label{lemalorenz}
	Let $a>0$, $\sigma> 1$ and $s\in[0,\frac{3}{2})$. Then, there exists a positive constant $C_{a,\sigma,s}$ such that, for all $f,g\in \dot{H}_{a,\sigma}^{s}(\mathbb{R}^3)$, we have
	$$\|fg\|_{\dot{H}_{a,\sigma}^{s}}\leq C_{a,\sigma,s}	\|f\|_{\dot{H}_{a,\sigma}^{s}}\|g\|_{\dot{H}_{a,\sigma}^{s}}.$$
\end{lemma}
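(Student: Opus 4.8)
The plan is to argue entirely on the Fourier side. Writing $\widehat{fg}=c\,\widehat f*\widehat g$ and using Plancherel, one has $\|fg\|_{\dot{H}_{a,\sigma}^{s}}^2=c\int_{\R^3}|\xi|^{2s}e^{2a|\xi|^{1/\sigma}}|(\widehat f*\widehat g)(\xi)|^2\,d\xi$, so that after the pointwise bound $|\widehat{fg}(\xi)|\le c\,(|\widehat f|*|\widehat g|)(\xi)$ the whole estimate reduces to a weighted bilinear convolution inequality. The conceptual point I would stress at the outset is that $s<\tfrac32=\tfrac d2$ lies strictly below the Sobolev-algebra threshold, so the polynomial weight $|\xi|^s$ cannot by itself control the product; the sub-exponential Gevrey weight must furnish the missing high-frequency decay, and the crude sub-additivity bound $e^{a|\xi|^{1/\sigma}}\le e^{a|\xi-\eta|^{1/\sigma}}e^{a|\eta|^{1/\sigma}}$ is too wasteful here (it would effectively force $s\ge d/2$).

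The crux is therefore a \emph{refined} sub-additivity inequality that keeps a reserve. First I would prove the elementary estimate $(u+v)^{1/\sigma}\le u^{1/\sigma}+\tfrac1\sigma v^{1/\sigma}$ for $0\le v\le u$, which follows from $(u+v)^{1/\sigma}-u^{1/\sigma}=\tfrac1\sigma\int_u^{u+v}t^{1/\sigma-1}\,dt\le\tfrac1\sigma u^{1/\sigma-1}v\le\tfrac1\sigma v^{1/\sigma}$, using that $t^{1/\sigma-1}$ is decreasing because $\sigma>1$. Splitting $\R^3_\eta=\{|\eta|\le|\xi-\eta|\}\cup\{|\xi-\eta|<|\eta|\}$ and combining this with $|\xi|\le|\xi-\eta|+|\eta|$, on the first region I obtain $e^{a|\xi|^{1/\sigma}}\le e^{a|\xi-\eta|^{1/\sigma}}e^{\frac a\sigma|\eta|^{1/\sigma}}$, that is, a genuine reserve $e^{-a(1-\frac1\sigma)|\eta|^{1/\sigma}}$ on the low-frequency factor, available precisely because $\sigma>1$. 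Together with $|\xi|^s\le 2^s|\xi-\eta|^s$ on this region, the integrand is dominated by $F_s(\xi-\eta)\,\check G(\eta)$, where $F_s(\zeta)=|\zeta|^s e^{a|\zeta|^{1/\sigma}}|\widehat f(\zeta)|$ (so that $\|F_s\|_{L^2}=\|f\|_{\dot{H}_{a,\sigma}^{s}}$) and $\check G(\eta)=e^{\frac a\sigma|\eta|^{1/\sigma}}|\widehat g(\eta)|$; the symmetric region is handled by exchanging the roles of $f$ and $g$.

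To finish I would apply Young's inequality, $\|F_s*\check G\|_{L^2}\le\|F_s\|_{L^2}\,\|\check G\|_{L^1}$, and control the $L^1$ factor by Cauchy--Schwarz:
\[
\|\check G\|_{L^1}=\int_{\R^3}|\eta|^{-s}e^{-a(1-\frac1\sigma)|\eta|^{1/\sigma}}\big[|\eta|^s e^{a|\eta|^{1/\sigma}}|\widehat g(\eta)|\big]\,d\eta\le\Big(\int_{\R^3}|\eta|^{-2s}e^{-2a(1-\frac1\sigma)|\eta|^{1/\sigma}}\,d\eta\Big)^{1/2}\|g\|_{\dot{H}_{a,\sigma}^{s}}.
\]
The remaining weight integral equals $c\int_0^\infty r^{2-2s}e^{-2a(1-\frac1\sigma)r^{1/\sigma}}\,dr$, which converges at the origin exactly because $2-2s>-1$, that is $s<\tfrac32$, and at infinity because of the sub-exponential factor (here Lemma \ref{calculo} bounds $r^{2-2s}e^{-a(1-\frac1\sigma)r^{1/\sigma}}$ by a constant). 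Adding the two symmetric contributions yields $\|fg\|_{\dot{H}_{a,\sigma}^{s}}\lesssim\|f\|_{\dot{H}_{a,\sigma}^{s}}\|g\|_{\dot{H}_{a,\sigma}^{s}}$. I expect the main obstacle to be precisely the refined sub-additivity step, namely the insight that one must retain the reserve decay rather than distribute the full Gevrey weight; the hypothesis $s<\tfrac32$ then enters only, but decisively, through the low-frequency convergence of the auxiliary weight integral.
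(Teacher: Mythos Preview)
Your proof is correct. The paper does not supply its own argument but simply cites Lemma~2.9 in \cite{lorenz}; the method there is exactly the one you outline---the refined sub-additivity $(u+v)^{1/\sigma}\le u^{1/\sigma}+\tfrac1\sigma v^{1/\sigma}$ for $0\le v\le u$ (valid because $\sigma>1$), the splitting of the convolution into the regions $\{|\eta|\le|\xi-\eta|\}$ and its complement, Young's inequality, and the Cauchy--Schwarz estimate on the $L^1$ reserve factor, with convergence of the auxiliary weight integral dictated by $s<\tfrac32$ at the origin and by $a>0$, $\sigma>1$ at infinity.
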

\begin{proof}
	For  more details, see Lemma $2.9$ in \cite{lorenz} (and references therein).
\end{proof}

\begin{lemma}[see \cite{pontofixo}]\label{pontofixo}
		Let $(X,\|\cdot\|)$ be a Banach space, $L:X\rightarrow X$ a continuous linear operator and
 $B:X\times X\rightarrow X$ a continuous bilinear operator, i.e., there exist positive constants $C_1$ and $C_2$ such that
 $$\|L(x)\|\leq C_1 \|x\|,\quad \|B(x,y)\|\leq C_2 \|x\|\|y\|,\quad \forall x,y\in X.$$
 Then, for each $C_1\in (0,1)$ and $x_0\in X$ that satisfy $4C_2\|x_0\| <(1-C_1)^2$, one has that the equation 
 $$a=x_0+B(a,a)+L(a), \quad a\in X,$$
 admits solution $x\in X$. Moreover, $x$ satisfies the inequality $\|x\|\leq \frac{2\|x_0\|}{1-C_1}$ and this is the only one such that $\|x\|\leq  \frac{1-C_1}{2C_2}.$
	\end{lemma}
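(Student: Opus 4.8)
The plan is to realize the solution as a fixed point of the affine–quadratic map $\Phi(a):=x_0+B(a,a)+L(a)$ on a suitable closed ball of the Banach space $X$, and to invoke the Banach fixed-point theorem (the closed ball being complete). The only genuinely delicate point will be upgrading existence and uniqueness in a small ball to the stated uniqueness in the larger ball of radius $\frac{1-C_1}{2C_2}$.

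First I would pin down the radius on which $\Phi$ is a self-map. For $\|a\|\le R$ the operator bounds give $\|\Phi(a)\|\le \|x_0\|+C_2R^2+C_1R$, so $\overline{B}(0,R)$ is invariant as soon as $C_2R^2-(1-C_1)R+\|x_0\|\le 0$. The hypothesis $4C_2\|x_0\|<(1-C_1)^2$ is precisely the condition that this quadratic have two positive roots $R_\pm=\frac{(1-C_1)\pm\sqrt{(1-C_1)^2-4C_2\|x_0\|}}{2C_2}$, and I would work on the smaller one $R:=R_-$, at which the quadratic vanishes, so that $\|\Phi(a)\|\le\|x_0\|+C_2R_-^2+C_1R_-=R_-$. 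Applying the elementary inequality $1-\sqrt{1-y}\le y$ for $y\in[0,1]$ with $y=\frac{4C_2\|x_0\|}{(1-C_1)^2}$ then yields $R_-\le\frac{2\|x_0\|}{1-C_1}$, which will give the asserted norm bound on the solution.

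Next I would establish the contraction. Writing $B(a,a)-B(b,b)=B(a,a-b)+B(a-b,b)$ and using bilinearity together with $\|a\|,\|b\|\le R_-$ gives $\|\Phi(a)-\Phi(b)\|\le(2C_2R_-+C_1)\|a-b\|$. Since $R_-$ lies strictly below the vertex $\frac{1-C_1}{2C_2}$ of the parabola, the constant $2C_2R_-+C_1$ is strictly less than $1$; the Banach fixed-point theorem then produces a unique $x\in\overline{B}(0,R_-)$ with $\Phi(x)=x$ and $\|x\|\le R_-\le\frac{2\|x_0\|}{1-C_1}$.

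The last and subtle step is uniqueness among all $a$ with $\|a\|\le\frac{1-C_1}{2C_2}$, where the direct contraction estimate degenerates to Lipschitz constant $\le 1$ on the boundary and so cannot conclude on its own. I would instead use an a priori bound: any fixed point satisfies $\|x\|=\|\Phi(x)\|\le\|x_0\|+C_1\|x\|+C_2\|x\|^2$, i.e. $g(\|x\|)\le\|x_0\|$ with $g(r):=(1-C_1)r-C_2r^2$. As $g$ attains its maximum $\frac{(1-C_1)^2}{4C_2}>\|x_0\|$ exactly at $r=\frac{1-C_1}{2C_2}$, the sublevel set $\{g\le\|x_0\|\}$ meets $[0,\frac{1-C_1}{2C_2}]$ only in $[0,R_-]$; hence every fixed point in the large ball actually lies in $\overline{B}(0,R_-)$, where the strict contraction forces it to coincide with $x$. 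This final point, resolved by the quadratic a priori estimate rather than by the contraction itself, is the step I expect to be the main obstacle.
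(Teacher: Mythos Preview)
Your argument is correct: the contraction on the ball $\overline{B}(0,R_-)$ yields existence with the bound $\|x\|\le R_-\le \frac{2\|x_0\|}{1-C_1}$, and the a~priori quadratic inequality $g(\|x\|)\le\|x_0\|$ cleanly forces any fixed point in the larger ball $\{\|a\|\le\frac{1-C_1}{2C_2}\}$ back into $\overline{B}(0,R_-)$, where the strict contraction settles uniqueness. The paper itself gives no proof of this lemma, referring instead to Lemma~5 in Cannone's handbook article \cite{pontofixo}; your treatment is the standard Picard argument behind that reference, so there is nothing to compare beyond noting that you have supplied the details the paper omits.
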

	\begin{proof}
		For  more details, see Lemma $5$ in \cite{pontofixo} (and references therein).
\end{proof}

Now, we shall list the preliminary lemmas that will be used in the proof of Theorem \ref{teoremaB}. 

\begin{lemma}[see \cite{robert}]\label{lemanovow1}
Let $a\geq0$, $\sigma\geq1$, $s\in\mathbb{R}$ and $\theta\geq1$. The following inequality holds:
$$\|f\|_{\dot{H}_{a,\sigma}^{s+1}}\leq \|f\|_{\dot{H}_{a,\sigma}^{s}}^{1-\frac{1}{\theta}}\|f\|_{\dot{H}_{a,\sigma}^{s+\theta}}^{\frac{1}{\theta}}.$$
  \end{lemma}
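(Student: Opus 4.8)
The plan is to recognize the claimed bound as a direct instance of Hölder's inequality once the Gevrey weight and the Fourier factor common to all three norms are frozen into a single measure. First I would dispose of the borderline case $\theta=1$, where the inequality reduces to the trivial identity $\|f\|_{\dot{H}_{a,\sigma}^{s+1}}\leq\|f\|_{\dot{H}_{a,\sigma}^{s+1}}$, and then assume $\theta>1$ so that the conjugate Hölder exponent is finite.

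For $\theta>1$, introduce the nonnegative Borel measure
$$
d\mu(\xi):=e^{2a|\xi|^{1/\sigma}}\,|\widehat{f}(\xi)|^{2}\,d\xi
$$
on $\mathbb{R}^3$. With this notation, each of the three relevant squared norms is simply a moment of $|\xi|$ against $d\mu$:
$$
\|f\|_{\dot{H}_{a,\sigma}^{r}}^{2}=\int_{\mathbb{R}^3}|\xi|^{2r}\,d\mu(\xi),\qquad r\in\{s,\;s+1,\;s+\theta\}.
$$
The key algebraic observation is the pointwise exponent splitting
$$
|\xi|^{2(s+1)}=\bigl(|\xi|^{2s}\bigr)^{1-\frac{1}{\theta}}\bigl(|\xi|^{2(s+\theta)}\bigr)^{\frac{1}{\theta}},
$$
which holds because $2s\bigl(1-\tfrac{1}{\theta}\bigr)+2(s+\theta)\tfrac{1}{\theta}=2(s+1)$.

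Applying Hölder's inequality against $d\mu$ with conjugate exponents $p=\frac{\theta}{\theta-1}$ and $q=\theta$ (so that $\tfrac{1}{p}+\tfrac{1}{q}=1$) to the product above then yields
$$
\int_{\mathbb{R}^3}|\xi|^{2(s+1)}\,d\mu\leq\Bigl(\int_{\mathbb{R}^3}|\xi|^{2s}\,d\mu\Bigr)^{1-\frac{1}{\theta}}\Bigl(\int_{\mathbb{R}^3}|\xi|^{2(s+\theta)}\,d\mu\Bigr)^{\frac{1}{\theta}}.
$$
Rewriting this in terms of the norms gives $\|f\|_{\dot{H}_{a,\sigma}^{s+1}}^{2}\leq\|f\|_{\dot{H}_{a,\sigma}^{s}}^{2(1-\frac{1}{\theta})}\|f\|_{\dot{H}_{a,\sigma}^{s+\theta}}^{\frac{2}{\theta}}$, and taking square roots completes the argument.

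As for difficulty, there is essentially no serious obstacle: the whole content is the observation that the factor $e^{2a|\xi|^{1/\sigma}}|\widehat{f}(\xi)|^{2}$ is common to all three integrals and can therefore be absorbed into one measure, after which the statement is a one-line Hölder estimate in the spirit of the standard Sobolev interpolation inequality. The only minor care needed is to verify the exponent bookkeeping in the splitting and to note that the degenerate case $\theta=1$ (where $p=\infty$) is trivially an equality, so that the inequality in fact holds across the full range $\theta\geq 1$.
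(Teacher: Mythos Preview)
Your proof is correct and is exactly the standard H\"older-interpolation argument one expects for this type of inequality; the paper itself does not give a proof but merely refers to Lemma~2.1 in \cite{robert}, where the same computation is carried out. There is nothing to add.
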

\begin{proof}
For more details, see Lemma $2.1$ in \cite{robert}.
\end{proof}

\begin{lemma}[see \cite{robert}]\label{lemanovow2}
Let $a>0$, $\sigma>1$, $s\in[-1, \frac{3}{2})$, $\alpha\geq1$ and $\beta\geq1$. For every $f\in \dot{H}_{a,\sigma}^s\cap \dot{H}_{a,\sigma}^{s+\alpha}$ and $g\in \dot{H}_{a,\sigma}^s\cap \dot{H}_{a,\sigma}^{s+\beta}$, we have that $fg \in \dot{H}_{a,\sigma}^{s+1}$. More precisely, we have
\begin{enumerate}
  \item[\emph{i)}] $\displaystyle\|fg\|_{\dot{H}_{a,\sigma}^{s+1}}\leq C_{s}[\|e^{\frac{a}{\sigma}|\cdot|^{\frac{1}{\sigma}}}\widehat{f}\|_{L^1}\|g\|_{\dot{H}_{a,\sigma}^{s}}^{1-\frac{1}{\beta}}\|g\|_{\dot{H}_{a,\sigma}^{s+\beta}}^{\frac{1}{\beta}}
+\|e^{\frac{a}{\sigma}|\cdot|^{\frac{1}{\sigma}}}\widehat{g}\|_{L^1}\|f\|_{\dot{H}_{a,\sigma}^{s}}^{1-\frac{1}{\alpha}}\|f\|_{\dot{H}_{a,\sigma}^{s+\alpha}}^{\frac{1}{\alpha}}];$
  \item[\emph{ii)}] $\displaystyle\|fg\|_{\dot{H}_{a,\sigma}^{s+1}}\leq C_{a,\sigma,s}[\|f\|_{\dot{H}_{a,\sigma}^{s}}\|g\|_{\dot{H}_{a,\sigma}^{s}}^{1-\frac{1}{\beta}}\|g\|_{\dot{H}_{a,\sigma}^{s+\beta}}^{\frac{1}{\beta}}
+\|g\|_{\dot{H}_{a,\sigma}^{s}}\|f\|_{\dot{H}_{a,\sigma}^{s}}^{1-\frac{1}{\alpha}}\|f\|_{\dot{H}_{a,\sigma}^{s+\alpha}}^{\frac{1}{\alpha}}].
$
\end{enumerate}

\end{lemma}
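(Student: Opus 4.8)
The plan is to prove Lemma~\ref{lemanovow2} by a frequency-space analysis that splits the product $fg$ according to whether the high frequency comes from $f$ or from $g$, and to control each piece using H\"older's inequality and the interpolation Lemma~\ref{lemanovow1}. First I would write the Sobolev--Gevrey norm of $fg$ on the Fourier side. Since $\widehat{fg} = (2\pi)^{-3}\,\widehat{f}*\widehat{g}$, the weight $|\xi|^{s+1}e^{a|\xi|^{1/\sigma}}$ must be distributed over the convolution. The key subadditivity facts are $|\xi|^{1/\sigma} \le |\xi-\eta|^{1/\sigma} + |\eta|^{1/\sigma}$ (valid because $1/\sigma \le 1$ and $t\mapsto t^{1/\sigma}$ is subadditive), so that $e^{a|\xi|^{1/\sigma}} \le e^{a|\xi-\eta|^{1/\sigma}}e^{a|\eta|^{1/\sigma}}$, together with the elementary bound $|\xi|^{s+1} \lesssim_s |\xi-\eta|^{s+1} + |\eta|^{s+1}$ for $s+1\ge 0$. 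Substituting these into the convolution and using the triangle inequality splits the weighted convolution into two symmetric integrals, one carrying the factor $|\eta|^{s+1}$ (high frequency on $g$) and one carrying $|\xi-\eta|^{s+1}$ (high frequency on $f$).

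Next I would estimate one of these two pieces, say the one where the derivative falls on $g$; the other is identical by symmetry. For this term the integrand factorizes as
\begin{equation*}
\big(e^{a|\xi-\eta|^{1/\sigma}}|\widehat{f}(\xi-\eta)|\big)\cdot\big(|\eta|^{s+1}e^{a|\eta|^{1/\sigma}}|\widehat{g}(\eta)|\big).
\end{equation*}
Taking the $L^2_\xi$ norm of the convolution and applying Young's inequality $\|F*G\|_{L^2}\le \|F\|_{L^1}\|G\|_{L^2}$ yields the bound $\|e^{a|\cdot|^{1/\sigma}}\widehat{f}\|_{L^1}\,\|g\|_{\dot{H}_{a,\sigma}^{s+1}}$. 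This already produces the $L^1$-weighted factor of $f$ appearing in part~i); to convert $\|g\|_{\dot{H}_{a,\sigma}^{s+1}}$ into the interpolated quantity $\|g\|_{\dot{H}_{a,\sigma}^{s}}^{1-1/\beta}\|g\|_{\dot{H}_{a,\sigma}^{s+\beta}}^{1/\beta}$ I would invoke Lemma~\ref{lemanovow1} with $\theta=\beta\ge1$. Handling the symmetric piece the same way with Lemma~\ref{lemanovow1} applied at $\theta=\alpha$ gives the companion term, and summing the two proves part~i). For part~ii), the only change is in how the $L^1$ weighted norms are estimated: instead of leaving $\|e^{\frac{a}{\sigma}|\cdot|^{1/\sigma}}\widehat{f}\|_{L^1}$ as is, I would bound it by the $\dot{H}_{a,\sigma}^{s}$ norm. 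This is exactly the Cauchy--Schwarz argument underlying Lemma~\ref{lemalorenz}: writing $e^{\frac{a}{\sigma}|\xi|^{1/\sigma}}|\widehat{f}(\xi)| = \big(|\xi|^{s}e^{a|\xi|^{1/\sigma}}|\widehat{f}(\xi)|\big)\cdot\big(|\xi|^{-s}e^{(\frac{1}{\sigma}-1)a|\xi|^{1/\sigma}}\big)$ and applying Cauchy--Schwarz, the second factor is square-integrable on $\mathbb{R}^3$ precisely when $s<\tfrac{3}{2}$ (near $\xi=0$ the singularity $|\xi|^{-2s}$ is integrable in dimension three, and the exponential with negative exponent controls the tail), giving $\|e^{\frac{a}{\sigma}|\cdot|^{1/\sigma}}\widehat{f}\|_{L^1}\le C_{a,\sigma,s}\|f\|_{\dot{H}_{a,\sigma}^{s}}$.

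The main obstacle I anticipate is the careful bookkeeping of the exponents $\tfrac{1}{\sigma}$ versus the shift by $\tfrac{a}{\sigma}$ that appears in the $L^1$ weight of part~i), as opposed to the full weight $a$ in the definition of $\dot{H}_{a,\sigma}^s$. The decomposition of the exponential must be arranged so that after splitting $e^{a|\xi|^{1/\sigma}}\le e^{a|\xi-\eta|^{1/\sigma}}e^{a|\eta|^{1/\sigma}}$, the residual weight left on the low-frequency factor is exactly $e^{\frac{a}{\sigma}|\cdot|^{1/\sigma}}$ rather than $e^{a|\cdot|^{1/\sigma}}$; this forces a more refined splitting in which part of the exponential weight is spent to secure the $L^2$ integrability of the auxiliary factor, and getting the constant $\tfrac{a}{\sigma}$ to match requires tracking the subadditivity inequality with the precise fraction $\tfrac{1}{\sigma}\le 1$ in hand. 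The interpolation step and Young's inequality are routine once the weights are correctly allocated, so the entire difficulty is concentrated in this initial distribution of the Gevrey weight across the convolution.
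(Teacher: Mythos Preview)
The paper does not actually prove this lemma; it refers to Lemma~2.2 of \cite{robert}. Your overall plan (Fourier-side analysis, Young's convolution inequality, then Lemma~\ref{lemanovow1} for the interpolation, and finally deducing ii) from i) by Cauchy--Schwarz) is the standard route and is almost certainly what \cite{robert} does. The genuine gap is in part~i), and you have located it without resolving it. With the plain subadditivity $e^{a|\xi|^{1/\sigma}}\le e^{a|\xi-\eta|^{1/\sigma}}e^{a|\eta|^{1/\sigma}}$ and the additive splitting of $|\xi|^{s+1}$, Young's inequality produces $\|e^{a|\cdot|^{1/\sigma}}\widehat f\|_{L^1}$ on the right-hand side, not $\|e^{\frac{a}{\sigma}|\cdot|^{1/\sigma}}\widehat f\|_{L^1}$. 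This is not a cosmetic mismatch: for $f\in\dot H^{s}_{a,\sigma}\cap\dot H^{s+\alpha}_{a,\sigma}$ in the stated range the former quantity need not be finite (no exponential decay remains to make any Cauchy--Schwarz auxiliary weight integrable at infinity), so the bound you obtain is vacuous. Your suggestion that ``part of the exponential weight is spent to secure $L^2$ integrability of the auxiliary factor'' conflates the mechanism of part~ii) with the difficulty in part~i), where no such auxiliary factor arises.

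What is missing is a sharpened subadditivity coming from the strict concavity of $t\mapsto t^{1/\sigma}$, and it requires splitting the convolution by regions rather than splitting $|\xi|^{s+1}$ additively. On the set $\{|\xi-\eta|\le|\eta|\}$ one has $|\xi|^{s+1}\le 2^{s+1}|\eta|^{s+1}$, and the mean-value estimate $(x+y)^{1/\sigma}\le y^{1/\sigma}+\tfrac{1}{\sigma}y^{\frac{1}{\sigma}-1}x$ together with $x\le y$ (so $y^{\frac{1}{\sigma}-1}x\le x^{1/\sigma}$, since $\tfrac{1}{\sigma}-1<0$) gives
\[
|\xi|^{1/\sigma}\le(|\xi-\eta|+|\eta|)^{1/\sigma}\le |\eta|^{1/\sigma}+\tfrac{1}{\sigma}\,|\xi-\eta|^{1/\sigma},
\]
hence $e^{a|\xi|^{1/\sigma}}\le e^{\frac{a}{\sigma}|\xi-\eta|^{1/\sigma}}e^{a|\eta|^{1/\sigma}}$ on that region, and the symmetric bound on $\{|\xi-\eta|>|\eta|\}$. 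After this refinement your Young-plus-interpolation argument goes through verbatim and produces exactly the weight $e^{\frac{a}{\sigma}|\cdot|^{1/\sigma}}$ in i), with constant $C_s$ independent of $a,\sigma$; part~ii) then follows from i) by the Cauchy--Schwarz bound you described.
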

\begin{proof}
For more details, see Lemma $2.2$ in \cite{robert}.
\end{proof}

	\begin{lemma}[see \cite{Be14}]\label{benameur}
		Let $a\geq0$ and $\sigma\geq 1$. Then, 
$$\displaystyle e^{a|\xi|^\frac{1}{\sigma}}\leq e^{a|\xi-\eta|^\frac{1}{\sigma}}e^{a|\eta|^\frac{1}{\sigma}},\quad \forall\xi,\eta\in \mathbb{R}^3.$$
	\end{lemma}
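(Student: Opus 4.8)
The plan is to take logarithms and thereby reduce the claimed multiplicative inequality to a subadditivity statement for the map $t\mapsto t^{1/\sigma}$. Since the exponential is increasing and $a\ge 0$, the inequality is equivalent to $a|\xi|^{1/\sigma}\le a|\xi-\eta|^{1/\sigma}+a|\eta|^{1/\sigma}$. The case $a=0$ is trivial (both sides equal $1$), so I may assume $a>0$ and divide by $a$, which leaves me to prove the purely geometric estimate
$$|\xi|^{1/\sigma}\le |\xi-\eta|^{1/\sigma}+|\eta|^{1/\sigma}.$$

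First I would invoke the triangle inequality for the Euclidean norm, $|\xi|\le |\xi-\eta|+|\eta|$, together with the fact that $t\mapsto t^{1/\sigma}$ is nondecreasing on $[0,\infty)$ (because $1/\sigma>0$). Applying monotonicity to the triangle inequality reduces the problem to the subadditivity of the power function with exponent $p:=1/\sigma\in(0,1]$, namely $(x+y)^p\le x^p+y^p$ for all $x,y\ge 0$.

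To establish this subadditivity I would use a normalization argument. If $x+y=0$ there is nothing to prove, so assume $x+y>0$ and set $u:=x/(x+y)$ and $v:=y/(x+y)$, so that $u,v\in[0,1]$ and $u+v=1$. Since $0<p\le 1$ and $u\in[0,1]$, one has $u^{p}\ge u$, and likewise $v^{p}\ge v$; summing gives $u^{p}+v^{p}\ge u+v=1$. Multiplying through by $(x+y)^{p}$ yields $x^{p}+y^{p}\ge (x+y)^{p}$, as desired. Chaining the three estimates—subadditivity, monotonicity, and the triangle inequality—gives $|\xi|^{1/\sigma}\le|\xi-\eta|^{1/\sigma}+|\eta|^{1/\sigma}$, and re-exponentiating recovers the stated form.

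The computations here are entirely elementary; the only genuinely substantive point is the subadditivity of $t\mapsto t^{1/\sigma}$ for $\sigma\ge 1$, which itself rests on the elementary bound $u^{p}\ge u$ for $u\in[0,1]$ and $p\in(0,1]$. I expect no real obstacle beyond handling the degenerate cases $a=0$ and $\xi-\eta=\eta=0$ with care.
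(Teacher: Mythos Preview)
Your argument is correct: the reduction to subadditivity of $t\mapsto t^{1/\sigma}$ via the triangle inequality and monotonicity is exactly the right idea, and your normalization proof of $(x+y)^{p}\le x^{p}+y^{p}$ for $p\in(0,1]$ is clean and complete.

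The paper itself does not give a proof here at all; it simply refers the reader to inequality~(17) in Benameur~\cite{Be14}. So your write-up actually supplies more than the paper does: a self-contained elementary derivation rather than a bare citation. The underlying content is the same (the inequality in \cite{Be14} is established by precisely this subadditivity observation), so there is no methodological divergence to discuss---you have just made explicit what the paper leaves to the reference.
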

	\begin{proof}
		For more  details, see the inequality ($17$) in \cite{Be14}.
\end{proof}

Lastly, we shall present the  lemmas that will be used in the proof of Corollary \ref{corolario}.


	\begin{lemma}[see \cite{Be14}]\label{benameur1}
		Let $\delta>\frac{3}{2}$ and $f\in \dot{H}^{\delta}\cap L^2$. Then, there is $C_\delta>0$ such that
$$\|\widehat{f}\|_{L^1}\leq C_\delta \|f\|_{L^2}^{1-\frac{3}{2\delta}}\|f\|_{\dot{H}^{\delta}}^{\frac{3}{2\delta}}.$$
Moreover, for each $\delta_0>\frac{3}{2}$ there is a positive constant $C_{\delta_0}$ such that $C_\delta\leq C_{\delta_0}$ for all $\delta\geq \delta_0.$
	\end{lemma}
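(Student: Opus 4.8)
The plan is to bound $\|\widehat f\|_{L^1}$ by the classical frequency-splitting method combined with Cauchy--Schwarz, and then to optimise over the splitting radius. Fix a parameter $R>0$ and decompose
$$
\int_{\mathbb{R}^3}|\widehat f(\xi)|\,d\xi
=\int_{|\xi|\le R}|\widehat f(\xi)|\,d\xi+\int_{|\xi|>R}|\widehat f(\xi)|\,d\xi.
$$
On the low-frequency region, Cauchy--Schwarz and Plancherel give $\int_{|\xi|\le R}|\widehat f|\,d\xi\le (\operatorname{vol}B_R)^{1/2}\|\widehat f\|_{L^2}\le c_0\,R^{3/2}\|f\|_{L^2}$. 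On the high-frequency region I would insert the weight $|\xi|^{\delta}|\xi|^{-\delta}$ and apply Cauchy--Schwarz to obtain
$$
\int_{|\xi|>R}|\widehat f|\,d\xi\le\Big(\int_{|\xi|>R}|\xi|^{-2\delta}\,d\xi\Big)^{1/2}\|f\|_{\dot H^{\delta}}.
$$
The radial integral $\int_{|\xi|>R}|\xi|^{-2\delta}\,d\xi=4\pi\int_R^{\infty}r^{2-2\delta}\,dr$ converges precisely because $\delta>\tfrac32$, and equals $\tfrac{4\pi}{2\delta-3}R^{3-2\delta}$. This is the single place where the hypothesis $\delta>\tfrac32$ is used, and it yields $\int_{|\xi|>R}|\widehat f|\,d\xi\le c_1(2\delta-3)^{-1/2}R^{3/2-\delta}\|f\|_{\dot H^{\delta}}$.

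Writing $A:=c_0\|f\|_{L^2}$ and $B:=c_1(2\delta-3)^{-1/2}\|f\|_{\dot H^{\delta}}$, we arrive at $\|\widehat f\|_{L^1}\le A\,R^{3/2}+B\,R^{3/2-\delta}$. Since $\tfrac32-\delta<0$, the right-hand side tends to $+\infty$ as $R\to 0^{+}$ and as $R\to\infty$, so it has an interior minimiser. Differentiating gives the optimum at $R^{\delta}=\tfrac{2\delta-3}{3}\cdot\tfrac{B}{A}$, and substituting back balances the two contributions: one computes
$$
A\,R^{3/2}=A^{1-\frac{3}{2\delta}}B^{\frac{3}{2\delta}}\Big(\tfrac{2\delta-3}{3}\Big)^{\frac{3}{2\delta}},
\qquad
B\,R^{3/2-\delta}=\tfrac{3}{2\delta-3}\,A\,R^{3/2}.
$$
Recalling the definitions of $A$ and $B$, the powers of $\|f\|_{L^2}$ and $\|f\|_{\dot H^{\delta}}$ produced are exactly $1-\tfrac{3}{2\delta}$ and $\tfrac{3}{2\delta}$, which establishes the interpolation inequality with the $\delta$-dependent factors collected into
$$
C_\delta= c_0^{\,1-\frac{3}{2\delta}}\Big(\tfrac{c_1}{3}\Big)^{\frac{3}{2\delta}}(2\delta-3)^{\frac{3}{4\delta}}\cdot\frac{2\delta}{2\delta-3}.
$$

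The only nontrivial point is the final uniformity claim, and here I would simply analyse this explicit expression. The map $\delta\mapsto C_\delta$ is continuous on $(\tfrac32,\infty)$. As $\delta\to\infty$, each exponent $\tfrac{3}{2\delta},\tfrac{3}{4\delta}\to0$, and since $(2\delta-3)^{3/(4\delta)}=\exp\!\big(\tfrac{3}{4\delta}\log(2\delta-3)\big)\to1$ while $\tfrac{2\delta}{2\delta-3}\to1$, the constant has the finite limit $C_\delta\to c_0$. The sole singularity occurs as $\delta\to\tfrac32^{+}$, where the vanishing factor $(2\delta-3)^{3/(4\delta)}\sim(2\delta-3)^{1/2}$ fails to compensate $\tfrac{2\delta}{2\delta-3}\sim(2\delta-3)^{-1}$. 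Consequently $C_\delta$ is bounded on every interval $[\delta_0,\infty)$ with $\delta_0>\tfrac32$, which furnishes the claimed constant $C_{\delta_0}$. I expect this bookkeeping of the explicit $\delta$-dependence to be the main, though entirely elementary, obstacle, since the interpolation argument itself is standard.
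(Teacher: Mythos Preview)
Your argument is correct: the frequency splitting at radius $R$, Cauchy--Schwarz on each piece, and optimisation in $R$ give exactly the stated interpolation with the displayed constant, and your asymptotic analysis of $C_\delta$ legitimately yields the uniform bound on $[\delta_0,\infty)$. The paper does not actually prove this lemma --- it merely refers to Lemma~2.1 in \cite{Be14} --- and your approach is the standard one that underlies that reference, so there is nothing to compare beyond noting that you have supplied the details (including the explicit tracking of the $\delta$-dependence) that the paper omits.
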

	\begin{proof}
		For more details, see Lemma $2.1$ in \cite{Be14}.
\end{proof}

\begin{lemma}[see \cite{lorenz}]\label{lemalorenz1}
	Let $a>0$, $\sigma\geq 1$, $s\in[0,\frac{3}{2})$ and $\delta\geq \frac{3}{2}$. Then, there exists a positive constant $C_{a,\sigma,s,\delta}$ such that
	$$\|f\|_{\dot{H}^{\delta}}\leq C_{a,\sigma,s,\delta}	\|f\|_{\dot{H}_{a,\sigma}^{s}},\quad \forall f\in \dot{H}_{a,\sigma}^{s}.$$
\end{lemma}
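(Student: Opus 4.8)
The plan is to reduce the claimed inequality to a pointwise comparison of the Fourier-side weights and then invoke Lemma~\ref{calculo}. Writing both norms via their Fourier representations, the desired estimate
$$\|f\|_{\dot{H}^\delta}^2 = \int_{\mathbb{R}^3} |\xi|^{2\delta}|\widehat{f}(\xi)|^2\,d\xi \le C_{a,\sigma,s,\delta}^2 \int_{\mathbb{R}^3} |\xi|^{2s} e^{2a|\xi|^{1/\sigma}}|\widehat{f}(\xi)|^2\,d\xi = C_{a,\sigma,s,\delta}^2 \|f\|_{\dot{H}^s_{a,\sigma}}^2$$
follows at once provided I can establish the pointwise weight bound
$$|\xi|^{2(\delta-s)}\, e^{-2a|\xi|^{1/\sigma}} \le C_{a,\sigma,s,\delta}^2, \qquad \forall\, \xi \in \mathbb{R}^3.$$
Observe that the hypotheses $s \in [0,\tfrac{3}{2})$ and $\delta \ge \tfrac{3}{2}$ force $\delta - s > 0$, so the left-hand side is the product of an increasing power and a decaying exponential, a quantity that is bounded precisely because exponential decay dominates polynomial growth.

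To make this quantitative, I would substitute $\lambda := |\xi|^{1/\sigma} \ge 0$, so that $|\xi|^{2(\delta-s)} = \lambda^{2\sigma(\delta-s)}$ and $e^{-2a|\xi|^{1/\sigma}} = e^{-2a\lambda}$, reducing the target to
$$\lambda^{2\sigma(\delta-s)}\, e^{-2a\lambda} \le C_{a,\sigma,s,\delta}^2, \qquad \forall\, \lambda > 0.$$
This is exactly the content of Lemma~\ref{calculo}, applied with exponent $p = 2\sigma(\delta-s) > 0$ and rate $b = 2a > 0$, which yields
$$\lambda^{2\sigma(\delta-s)}\, e^{-2a\lambda} \le \big[2\sigma(\delta-s)\big]^{2\sigma(\delta-s)} (2ea)^{-2\sigma(\delta-s)} =: C_{a,\sigma,s,\delta}^2.$$
The resulting constant depends only on $a,\sigma,s,\delta$, as required, and the value $\lambda = 0$ (i.e. $\xi = 0$) is trivial since the left-hand side vanishes there.

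Finally, I would multiply the pointwise inequality by $|\xi|^{2s}|\widehat{f}(\xi)|^2 \ge 0$ and integrate over $\mathbb{R}^3$; the right-hand integrand is integrable precisely when $f \in \dot{H}^s_{a,\sigma}$, and taking square roots gives the stated bound. I do not anticipate any genuine obstacle: the whole argument hinges on the single elementary estimate of Lemma~\ref{calculo}, and the only point that must be checked is that $\delta - s > 0$, so that the power $\lambda^{2\sigma(\delta-s)}$ is strictly positive and Lemma~\ref{calculo} indeed applies. (Were the degenerate case $\delta = s$ admitted, the weight $|\xi|^{2(\delta-s)} = 1$ would still be dominated by $e^{2a|\xi|^{1/\sigma}}$, but this situation does not occur under the stated hypotheses.)
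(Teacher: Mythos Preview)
Your argument is correct: the pointwise weight comparison $|\xi|^{2(\delta-s)}e^{-2a|\xi|^{1/\sigma}}\le C$ (via the substitution $\lambda=|\xi|^{1/\sigma}$ and Lemma~\ref{calculo}) is exactly the right mechanism, and the hypotheses $s<\tfrac32\le\delta$ ensure the exponent $2\sigma(\delta-s)$ is strictly positive so that Lemma~\ref{calculo} applies. The paper itself does not give a proof but merely cites Lemma~2.4 of \cite{lorenz}; your self-contained argument is the standard one and almost certainly coincides with what that reference does.
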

\begin{proof}
	For  more details, see Lemma $2.4$ in \cite{lorenz}.
\end{proof}

\section{Proof of the main results:}

This section presents the proofs of Theorems \ref{teoremaexistenciaB} and \ref{teoremaB}, and Corollary \ref{corolario}. More precisely, we shall split this presentation into three subsections, which will contain the  arguments that establish  the veracity of each one of our  main results.

\subsection{Proof of Theorem \ref{teoremaexistenciaB}:}\label{secaoteoremaexistenciaB11}

First of all, apply  Helmholtz's projector $\mathbb{P}$, defined via Fourier transform (see \cite{PN} for more details)
$$
\mathcal{F}[\mathbb{P}f](\xi) :=  \widehat{f}(\xi) - \frac{\widehat{f}(\xi)\cdot \xi}{|\xi|^2}\, \xi,
$$
the heat semigroup $e^{-(t-\tau)(-\Delta)^{\alpha}}$ to the first equation in (\ref{micropolar}), and integrate the result obtained over $[0,t]$ to obtain
\begin{align}\label{wilber15}
u(t)= e^{- t(-\Delta)^{\alpha}}u_0 - \int_{0}^te^{- (t-\tau)(-\Delta)^{\alpha}} \mathbb{P}(u\cdot\nabla u)\,d\tau + \int_{0}^te^{- (t-\tau)(-\Delta)^{\alpha}} \mathbb{P}(\theta e_3)\,d\tau,\quad\forall t>0.
\end{align}
Similarly, by using the heat semigroup $e^{- (t-\tau)(-\Delta)^{\beta}}$ in the second equation of (\ref{micropolar}), and integrating over $[0,t]$, we deduce
\begin{align}\label{wilber16}
\theta(t)= e^{- t(-\Delta)^{\beta}}\theta_0 - \int_{0}^te^{-  (t-\tau)(-\Delta)^{\beta}} [u\cdot \nabla\theta]\,d\tau ,\quad\forall t>0.
\end{align}
Hence, we are able to write the following equality:
\begin{align}\label{estimativaprojecao3}
(u,\theta)(t)&= (e^{- t(-\Delta)^{\alpha}}u_0,e^{- t (-\Delta)^{\beta}}\theta_0) + B((u,\theta),(u,\theta))(t)+L(u,\theta)(t),\quad\forall t>0.
\end{align}
where
\begin{align}\label{bilinearB}
B((w,v),(\gamma,\phi))(t)=(B_1((w,v),(\gamma,\phi)),B_2((w,v),(\gamma,\phi)))(t),\quad\forall t>0,
\end{align}
and also
\begin{align}\label{linearL}
L(w,v)(t)=(L_1(w,v)(t),L_2(w,v)(t)),\quad\forall t>0,
\end{align}
with
\begin{align}\label{bilinearB1}
B_1((w,v),(\gamma,\phi))(t)= - \int_{0}^te^{- (t-\tau)(-\Delta)^{\alpha}} \mathbb{P}(w\cdot\nabla \gamma)\,d\tau,
\end{align}
\begin{align}\label{bilinearB2}
B_2((w,v),(\gamma,\phi))(t)= - \int_{0}^te^{- (t-\tau)(-\Delta)^{\beta}} [w\cdot \nabla \phi]\,d\tau,
\end{align}
\begin{align}\label{linearL1eL2}
L_1(w,v)(t)= \int_{0}^te^{- (t-\tau)(-\Delta)^{\alpha}} \mathbb{P}[ve_3]\,d\tau \quad \hbox{    and    }\quad  L_2(w,v)(t)= 0,
\end{align}
for all $w,v,\gamma,\phi\in C_T(\dot{H}_{a,\sigma}^s)$ ($T>0$ will be revealed below). Denote $X= [C_T (\dot{H}_{a,\sigma}^{s})]^3$ $\times C_T (\dot{H}_{a,\sigma}^{s}) (\equiv C_T (\dot{H}_{a,\sigma}^{s})$ throughout this work)$\footnote{Here, $\|(f,g)\|_{X}:=[\|f\|^2_{L^\infty_T(\dot{H}_{a,\sigma}^s)}+\|g\|^2_{L^\infty_T(\dot{H}_{a,\sigma}^s)}]^{\frac{1}{2}}$}$ and notice that $B$ and $L$ are  bilinear and linear operators on $X\times X$ and $X$, respectively. 

We start proving that $L$ is a continuous operator. Observe that (\ref{linearL1eL2})  implies that
\begin{align}\label{wilber18}
\nonumber&\displaystyle\|L_1(w,v)(t)\|_{\dot{H}_{a,\sigma}^s}\leq  \int_0^t\Big(\int_{\mathbb{R}^3} e^{-2(t-\tau)|\xi|^{2\alpha}}|\xi|^{2s} e^{2a|\xi|^{\frac{1}{\sigma}}} |\mathcal{F}[\mathbb{P}( ve_3)](\xi)|^2\,d\xi\Big)^{\frac{1}{2}}\,d\tau\\
\nonumber&\leq\int_0^t\Big(\int_{\mathbb{R}^3} |\xi|^{2s} e^{2a|\xi|^{\frac{1}{\sigma}}} |\mathcal{F}(v)(\xi)|^2\,d\xi\Big)^{\frac{1}{2}}\,d\tau\\
&\leq T \|v\|_{L^{\infty}_T(\dot{H}_{a,\sigma}^{s})},
\end{align}
for all $t\in[0,T]$. As a consequence, by applying (\ref{linearL}) and (\ref{linearL1eL2}), one concludes
\begin{align}\label{lwfinal}
\displaystyle\|L(w,v)\|_{X}\leq  T \|(w,v)\|_{X},\quad\forall (w,v)\in X.
\end{align}
This means that $L$ is a continuous operator.

Now, let us show that $B$ is also a continuous operator. In fact, by using (\ref{bilinearB1}) and   $\hbox{Lemma \ref{calculo}}$, we have
\begin{align}\label{wilber17}
\nonumber\displaystyle\|B_1((w,v),(\gamma,\phi))(t)\|_{\dot{H}_{a,\sigma}^s}&\leq  \int_0^t\Big(\int_{\mathbb{R}^3} |\xi|^{2}e^{-2(t-\tau)|\xi|^{2\alpha}}|\xi|^{2s} e^{2a|\xi|^{\frac{1}{\sigma}}} |\mathcal{F}(\gamma\otimes w)(\xi)|^2\,d\xi\Big)^{\frac{1}{2}}\,d\tau\\
\nonumber&\leq C_{\alpha} \int_0^t (t-\tau)^{-\frac{1}{2\alpha}}\Big(\int_{\mathbb{R}^3} |\xi|^{2s} e^{2a|\xi|^{\frac{1}{\sigma}}} |\mathcal{F}(\gamma\otimes w)(\xi)|^2\,d\xi\Big)^{\frac{1}{2}}\,d\tau\\
&\leq C_{\alpha} \int_0^t (t-\tau)^{-\frac{1}{2\alpha}} \|(\gamma\otimes w)(\tau)\|_{\dot{H}_{a,\sigma}^{s}}\,d\tau.
\end{align}
By the use of  Lemma \ref{lemalorenz} (recall that $a>0,\sigma>1$ and $s\in[0,\frac{3}{2})$), we can write
\begin{align}
\nonumber\displaystyle\|B_1((w,v),(\gamma,\phi))(t)\|_{\dot{H}_{a,\sigma}^s}
&\leq C_{a,\sigma,s,\alpha}\|\gamma\|_{L_T^{\infty}(\dot{H}_{a,\sigma}^{s})} \|w\|_{L_T^{\infty}(\dot{H}_{a,\sigma}^{s})}\int_0^t (t-\tau)^{-\frac{1}{2\alpha}} d\tau\\
\label{w1}&\leq C_{a,\sigma,s,\alpha}T^{1-\frac{1}{2\alpha}}\|(w,v)\|_{X} \|(\gamma,\phi)\|_{X},
\end{align}
for all $t\in [0,T]$ (since $\alpha>\frac{1}{2}$). By applying similar arguments, one infers
\begin{align}\label{w2}
\|B_2((w,v),(\gamma,\phi))(t)\|_{\dot{H}_{a,\sigma}^s}\leq C_{a,\sigma,s,\beta}T^{1-\frac{1}{2\beta}}\|(w,v)\|_{X} \|(\gamma,\phi)\|_{X},\
\end{align}
for all $t\in [0,T]$ (see (\ref{bilinearB2}) and recall that $\beta>\frac{1}{2}$). Observing  (\ref{bilinearB}), (\ref{w1}) and (\ref{w2}), we reach the following inequality:
\begin{align}\label{w5}
\|B((w,v),(\gamma,\phi))\|_{X\times X}&\leq C_{a,\sigma,s,\alpha,\beta}[T^{1-\frac{1}{2\alpha}}+T^{1-\frac{1}{2\beta}}]\|(w,v)\|_{X} \|(\gamma,\phi)\|_{X},\
\end{align}
for all $((w,v), (\gamma,\phi))\in X\times X$. This means that $B$ is a continuous operator.

At last, notice that 
\begin{align}\label{mudanca1}
\displaystyle\|e^{- t(-\Delta)^{\alpha}} u_0\|^2_{\dot{H}_{a,\sigma}^s}&=\int_{\mathbb{R}^3} e^{-2 t|\xi|^{2\alpha}}|\xi|^{2s} e^{2a|\xi|^{\frac{1}{\sigma}}} |\widehat{u}_0(\xi)|^2\,d\xi\leq  \|u_0\|_{\dot{H}_{a,\sigma}^{s}}^2,\quad\forall t\in [0,T],
\end{align}
and also that
\begin{align}\label{mudanca2}
\displaystyle\|e^{- t(-\Delta)^{\beta}} \theta_0\|_{\dot{H}_{a,\sigma}^s}\leq  \|\theta_0\|_{\dot{H}_{a,\sigma}^{s}},\quad\forall t\in [0,T].
\end{align}
From (\ref{mudanca1}) and (\ref{mudanca2}), it follows that
\begin{align}\label{dadoincial}
\|(e^{-t(-\Delta)^{\alpha}}u_0,e^{-t (-\Delta)^{\beta}}\theta_0)\|_{X}\leq\|(u_0,\theta_0)\|_{\dot{H}_{a,\sigma}^{s}}.
\end{align}
Thus, we are able to determine $T>0$ as follows: 
$$T< \min\Big\{\Big[\sqrt{8C_{a,\sigma,s,\alpha,\beta}\|(u_0,\theta_0)\|_{\dot{H}_{a,\sigma}^{s}}}+1\Big]^{-\frac{4\alpha}{2\alpha-1}},\Big[\sqrt{8C_{a,\sigma,s,\alpha,\beta}\|(u_0,\theta_0)\|_{\dot{H}_{a,\sigma}^{s}}}+1\Big]^{-\frac{4\beta}{2\beta-1}}\Big\},$$
where $C_{a,\sigma,s,\alpha,\beta}$ is established in (\ref{w5}). Therefore, by  Lemma \ref{pontofixo}, there exists a unique solution $(u,\theta)\in X$ for the equation (\ref{estimativaprojecao3}) (it is enough to take a look at (\ref{lwfinal}), (\ref{w5}) and (\ref{dadoincial})) that satisfies the following inequality:
$$\|(u,\theta)\|_{X}\leq \frac{1-T}{2C_{a,\sigma,s,\alpha,\beta}[T^{1-\frac{1}{2\alpha}}+T^{1-\frac{1}{2\beta}}]}.$$

\caixa

\subsection{Proof of Theorem \ref{teoremaB}}\label{secaoteoremaw2}

In this section, we shall establish the proof of Theorem \ref{teoremaB} by considering the cases $n=1$ and $n=2$. By following the steps given below, it is easy to observe that the other cases are obtained by an induction argument.

\vspace{0.2cm}
\hspace{-0.5cm}\textbf{Proof of Theorem \ref{teoremaB} \textbf{i)} with $n=1$:}\label{subsecaolimitesuperiorw2}
\vspace{0.2cm}

First of all, by using  the first equation of (\ref{micropolar}), we can write the following inequality:
\begin{align}\label{3}
\frac{1}{2}\frac{d}{dt}\|u(t)\|_{\dot{H}_{a,\sigma}^s}^2+\|u(t)\|_{\dot{H}_{a,\sigma}^{s+\alpha}}^2 &\leq
|\langle u, u\cdot\nabla u\rangle_{\dot{H}_{a,\sigma}^s}|+  |\langle u_3,\theta\rangle_{\dot{H}_{a,\sigma}^s} |.
\end{align}
Similarly, by observing the second equation in (\ref{micropolar}), we conclude
\begin{align}\label{4}
\frac{1}{2}\frac{d}{dt}\|\theta(t)\|_{\dot{H}_{a,\sigma}^s}^2+\|\theta(t)\|_{\dot{H}_{a,\sigma}^{s+\beta}}^2 &\leq |\langle \theta, u\cdot\nabla \theta\rangle_{\dot{H}_{a,\sigma}^s}|.
\end{align}
Consequently, from (\ref{3}) and (\ref{4}), it follows that
\begin{align*}
\nonumber\frac{1}{2}\frac{d}{dt}\|(u,\theta)(t)\|_{\dot{H}_{a,\sigma}^s}^2+\|u(t)\|_{\dot{H}_{a,\sigma}^{s+\alpha}}^2+\|\theta(t)\|_{\dot{H}_{a,\sigma}^{s+\beta}}^2 &\leq
 \|u\|_{\dot{H}_{a,\sigma}^{s}}\|u\otimes u\|_{\dot{H}_{a,\sigma}^{s+1}}+ \|\theta\|_{\dot{H}_{a,\sigma}^{s}}\|\theta u\|_{\dot{H}_{a,\sigma}^{s+1}}\\
&\quad+ \|u\|_{\dot{H}_{a,\sigma}^{s}}\|\theta\|_{\dot{H}_{a,\sigma}^{s}}.
\end{align*}
By applying Lemma \ref{lemanovow2} i) and Lemma \ref{lemanovow1}  (since $a>0$, $\sigma>1$, $s\in[0,\frac{3}{2})$ and $\alpha,\beta\geq1$), we conclude
\begin{align}\label{n3}
\nonumber\frac{1}{2}\frac{d}{dt}\|(u,\theta)(t)\|_{\dot{H}_{a,\sigma}^s}^2+\|u(t)\|_{\dot{H}_{a,\sigma}^{s+\alpha}}^2+\|\theta(t)\|_{\dot{H}_{a,\sigma}^{s+\beta}}^2 &\leq C_s \|e^{\frac{a}{\sigma}|\cdot|^{\frac{1}{\sigma}}}(\widehat{u},\widehat{\theta})\|_{L^{1}}
[ \|(u,\theta)\|_{\dot{H}_{a,\sigma}^{s}}^{2-\frac{1}{\alpha}}\|u\|_{\dot{H}_{a,\sigma}^{s+\alpha}}^{\frac{1}{\alpha}}\\
&\quad+
\|(u,\theta)\|_{\dot{H}_{a,\sigma}^{s}}^{2-\frac{1}{\beta}}\|\theta\|_{\dot{H}_{a,\sigma}^{s+\beta}}^{\frac{1}{\beta}}]+ \|(u,\theta)\|_{\dot{H}_{a,\sigma}^{s}}^2.
\end{align}
Hence, apply the elementary Young's inequality and Lemma \ref{lemanovow2} ii) in order to obtain
\begin{align}\label{esqueci}
\frac{d}{dt}\|(u,\theta)(t)\|_{\dot{H}_{a,\sigma}^s}^2+\|u(t)\|_{\dot{H}_{a,\sigma}^{s+\alpha}}^2+\|\theta(t)\|_{\dot{H}_{a,\sigma}^{s+\beta}}^2 &\leq C_{a,\sigma,s,\alpha,\beta}\left( \|(u,\theta)\|_{\dot{H}_{a,\sigma}^s}^{\frac{2\alpha}{2\alpha-1}} +
 \|(u,\theta)\|_{\dot{H}_{a,\sigma}^s}^{\frac{2\beta}{2\beta-1}}+1\right)\|(u,\theta)\|_{\dot{H}_{a,\sigma}^s}^2.
\end{align}

Now, let us assume that $\displaystyle \limsup_{t\nearrow T^*} \|(u,\theta)(t)\|_{\dot{H}_{a,\sigma}^{s}} < \infty$. Thus, apply Theorem \ref{teoremaexistenciaB} to obtain a positive constant $C_{a,\sigma,s}$ such that
\begin{align}\label{limitacao}
\displaystyle \|(u,\theta)(t)\|_{\dot{H}_{a,\sigma}^{s}} \leq C_{a,\sigma,s},\quad\forall\,t\in[0,T^*).
\end{align}
Thereby, by integrating over $[0,t]$   the inequality (\ref{esqueci}) and using (\ref{limitacao}), we can write the following inequality:
\begin{align*}
&\nonumber\|(u,\theta)(t)\|_{\dot{H}_{a,\sigma}^s}^2+\int_0^t\|u(\tau)\|_{\dot{H}_{a,\sigma}^{s+\alpha}}^2+\int_0^t\|\theta(\tau)\|_{\dot{H}_{a,\sigma}^{s+\beta}}^2\,d\tau \leq \|(u_0,\theta_0)\|_{\dot{H}_{a,\sigma}^s}^2+C_{a,\sigma,s,\alpha,\beta}T^*,
\end{align*}
for all $t\in[0,T^*).$ As a result, particularly, one has
\begin{align}\label{e4}
\int_0^t\|u(\tau)\|_{\dot{H}_{a,\sigma}^{s+\alpha}}^2d\tau+\int_0^t\|\theta(\tau)\|_{\dot{H}_{a,\sigma}^{s+\beta}}^2\,d\tau \leq C_{a,\sigma,s,\alpha,\beta,u_0,\theta_0,T^*},\quad\forall\,t\in[0,T^*).
\end{align}

Now, let  $(\kappa_n)_{n\in\mathbb{N}}$ be an arbitrary sequence
such that $0<\kappa_n<T^*$ and
$\kappa_n\nearrow T^*$. Hence, we claim that 
\begin{align}\label{e5}
\lim_{n,m\rightarrow \infty} \|(u,\theta)(\kappa_n)-(u,\theta)(\kappa_m)\|_{\dot{H}_{a,\sigma}^s}=0.
\end{align}
In fact, by the use of the equalities (\ref{wilber15}) and (\ref{wilber16}), one can rewrite the difference presented in (\ref{e5}) as follows:
\begin{align*}
(u,\theta)(\kappa_n)-(u,\theta)(\kappa_m)= I_1+I_2+I_3,
\end{align*}
where
\begin{align}\label{I1}
I_1&= (I_{11},I_{12}):= ([e^{-  \kappa_n (-\Delta)^\alpha}-e^{-\kappa_m(-\Delta)^\alpha}]u_0,[e^{-\kappa_n (-\Delta)^\beta}-e^{- \kappa_m (-\Delta)^\beta}]\theta_0),
\end{align}
and
\begin{align}\label{wilber19}
\nonumber &I_2=(I_{21}^{(1)}+I_{21}^{(2)},I_{22}):= \Big(\int_{0}^{\kappa_m}[e^{- (\kappa_m-\tau)(-\Delta)^\alpha}-e^{- (\kappa_n-\tau)(-\Delta)^\alpha}] \mathbb{P}[u\cdot \nabla u]\,d\tau+\\ &
\int_{0}^{\kappa_m}[e^{-  (\kappa_n-\tau)(-\Delta)^\alpha}-e^{- (\kappa_m-\tau)(-\Delta)^\alpha}] \mathbb{P}[\theta e_3]\,d\tau,\int_{0}^{\kappa_m}[e^{- (\kappa_m-\tau)(-\Delta)^\beta}-e^{- (\kappa_n-\tau)(-\Delta)^\beta}] (u\cdot \nabla \theta)\,d\tau\Big),
\end{align}
and also
\begin{align}\label{wilber20}
\nonumber I_3=(I_{31}^{(1)}+I_{31}^{(2)},I_{32})&:= \Big(\int_{\kappa_n}^{\kappa_m} e^{- (\kappa_n-\tau)(-\Delta)^\alpha} \mathbb{P}[u\cdot \nabla u]\,d\tau+
\int_{\kappa_m}^{\kappa_n} e^{- (\kappa_n-\tau)(-\Delta)^\alpha} \mathbb{P}[\theta e_3]\,d\tau,\\
&\quad\int_{\kappa_n}^{\kappa_m}e^{- (\kappa_n-\tau)(-\Delta)^\beta} (u\cdot \nabla \theta)\,d\tau\Big).
\end{align}
In order to estimate $I_1$ (see (\ref{I1})), we establish the following inequality:
\begin{align*}
\|I_{12}\|_{\dot{H}_{a,\sigma}^s}^2
&\leq \int_{\mathbb{R}^3} [e^{- \kappa_n |\xi|^{2\beta}}-e^{-T^* |\xi|^{2\beta}}]^2|\xi|^{2s} e^{2a|\xi|^{\frac{1}{\sigma}}} |\widehat{\theta}_0(\xi)|^2\,d\xi.
\end{align*}
Therefore, by applying Dominated Convergence Theorem, one concludes $\displaystyle\lim_{n,m\rightarrow \infty}\|I_{12}\|_{\dot{H}_{a,\sigma}^s}$ $=0$ (it is enough to recall that $\theta_0\in \dot{H}_{a,\sigma}^s$). By the use of analogous arguments, one obtains
$\displaystyle\lim_{n,m\rightarrow \infty}\|I_{11}\|_{\dot{H}_{a,\sigma}^s}=0.$ Thereby, we can assure that  $\displaystyle\lim_{n,m\rightarrow\infty}\|I_1\|_{\dot{H}_{a,\sigma}^s}=0$. 

 Cauchy-Schwarz's inequality implies that the next inequality holds:
\begin{align*}
\nonumber\|I_{21}^{(1)}\|_{\dot{H}_{a,\sigma}^s}&\leq \int_0^{\kappa_m}\Big(\int_{\mathbb{R}^3}[e^{-(\kappa_m-\tau)|\xi|^{2\alpha}}-e^{-(\kappa_n-\tau)|\xi|^{2\alpha}}]^2|\xi|^{2s} e^{2a|\xi|^{\frac{1}{\sigma}}}|\mathcal{F}[u\cdot\nabla u](\xi)|^2d\xi\Big)^{\frac{1}{2}} d\tau\\
&\leq\sqrt{T^*}\Big(\int_0^{T^*}\int_{\mathbb{R}^3}[1-e^{- (T^*-\kappa_m)|\xi|^{2\alpha}}]^2|\xi|^{2s} e^{2a|\xi|^{\frac{1}{\sigma}}}|\mathcal{F}[u\cdot\nabla u](\xi)|^2d\xi d\tau\Big)^{\frac{1}{2}}.
\end{align*}
Apply Lemma \ref{lemanovow2} ii) (recall that $a>0$, $\sigma>1$, $s\in[0,\frac{3}{2})$ and $\alpha\geq1$), (\ref{limitacao}), Hölder's inequality and (\ref{e4}) to infer
\begin{align*}
\nonumber\int_0^{T^*}\|u\cdot\nabla u\|_{\dot{H}_{a,\sigma}^s}^2 d\tau&\leq \int_0^{T^*}\|u\otimes u\|_{\dot{H}_{a,\sigma}^{s+1}}^2d\tau\leq C_{a,\sigma,s,\alpha} \int_0^{T^*}\| u\|_{\dot{H}_{a,\sigma}^{s+\alpha}}^{\frac{2}{\alpha}}d\tau\\
&\leq C_{a,\sigma,s,\alpha} \Big(\int_0^{T^*}\| u\|_{\dot{H}_{a,\sigma}^{s+\alpha}}^2d\tau\Big)^{\frac{1}{\alpha}}(T^*)^{1-\frac{1}{\alpha}}\leq C_{a,\sigma,s,\alpha,\beta,u_0,b_0,T^*}.
\end{align*}
Thereby,  Dominated Convergence Theorem implies the  limit
$\displaystyle \lim_{n,m\rightarrow\infty}\|I_{21}^{(1)}\|_{\dot{H}_{a,\sigma}^s}$ $=0.$ By the same arguments, one concludes $\displaystyle \lim_{n,m\rightarrow\infty}\|I_{22}\|_{\dot{H}_{a,\sigma}^s}=0$ (recall that $\beta\geq1$). On the other hand, we can write the estimate below:
\begin{align}\label{wilber21}
\nonumber\|I_{21}^{(2)}\|_{\dot{H}_{a,\sigma}^s}&\leq \int_0^{\kappa_m}\Big(\int_{\mathbb{R}^3}[e^{-(\kappa_n-\tau)|\xi|^{2\alpha}}-e^{-(\kappa_m-\tau)|\xi|^{2\alpha}}]^2|\xi|^{2s} e^{2a|\xi|^{\frac{1}{\sigma}}}|\widehat{\theta}(\xi)|^2d\xi\Big)^{\frac{1}{2}} d\tau\\
&\leq\sqrt{T^*}\Big(\int_0^{T^*}\int_{\mathbb{R}^3}[1-e^{- (T^*-\kappa_n)|\xi|^{2\alpha}}]^2|\xi|^{2s} e^{2a|\xi|^{\frac{1}{\sigma}}}|\widehat{\theta}(\xi)|^2d\xi d\tau\Big)^{\frac{1}{2}}.
\end{align}
On the other hand, by (\ref{limitacao}), it is true that
\begin{align}\label{wilber22}
\int_0^{T^*}\|\theta\|_{\dot{H}_{a,\sigma}^s}^2 d\tau\leq C_{a,\sigma,s}^2 T^*.
\end{align}
Consequently, by using Dominated Convergence Theorem again, we deduce
$\displaystyle \lim_{n,m\rightarrow\infty}\|I_{21}^{(2)}\|_{\dot{H}_{a,\sigma}^s}$ $=0.$ These results above imply that $\displaystyle\lim_{n,m\rightarrow\infty} \|I_2\|_{\dot{H}_{a,\sigma}^s}=0$. 

Lemma \ref{lemanovow2} ii), (\ref{limitacao}), Hölder's inequality and (\ref{e4}) imply the next inequalities: 
\begin{align*}
\|I_{31}^{(1)}\|_{\dot{H}_{a,\sigma}^s}&\leq\int_{\kappa_n}^{\kappa_m}\|e^{-(\kappa_n-\tau)(- \Delta)^\alpha}\mathbb{P}(u\cdot\nabla u)\|_{\dot{H}_{a,\sigma}^s}\,d\tau\\
\nonumber&\leq C_{a,\sigma,s,\alpha} \Big(\int_0^{T^*}\| u\|_{\dot{H}_{a,\sigma}^{s+\alpha}}^2d\tau\Big)^{\frac{1}{2\alpha}}(T^*-\kappa_n)^{1-\frac{1}{2\alpha}}\\
\nonumber&\leq C_{a,\sigma,s,\alpha,\beta,u_0,b_0,T^*}(T^*-\kappa_n)^{1-\frac{1}{2\alpha}}.
\end{align*}
As a result, it follows that $\displaystyle\lim_{n,m\rightarrow\infty} \|I_{31}^{(1)}\|_{\dot{H}_{a,\sigma}^s}=0$ (since $\kappa_n\nearrow T^*$ and $\alpha\geq 1$). Analogously, we infer $\displaystyle\lim_{n,m\rightarrow\infty} \|I_{32}\|_{\dot{H}_{a,\sigma}^s}=0$ (recall that $\beta\geq1$). Furthermore, by (\ref{limitacao}) again, we can write
\begin{align}\label{wilber23}
\nonumber\|I_{31}^{(2)}\|_{\dot{H}_{a,\sigma}^s}&\leq\int_{\kappa_m}^{\kappa_n}\|e^{-(\kappa_n-\tau)(- \Delta)^\alpha}\mathbb{P}[\theta e_3]\|_{\dot{H}_{a,\sigma}^s}\,d\tau\\
&\leq \int_{\kappa_m}^{T^*}\|\theta\|_{\dot{H}_{a,\sigma}^s}\,d\tau\leq C_{a,\sigma,s}(T^*-\kappa_m).
\end{align}
As a consequence, we obtain $\displaystyle\lim_{n,m\rightarrow\infty} \|I_{31}^{(2)}\|_{\dot{H}_{a,\sigma}^s}=0$ (since $\kappa_m\nearrow T^*$). Therefore, $\displaystyle\lim_{n,m\rightarrow\infty} \|I_{3}\|_{\dot{H}_{a,\sigma}^s}$ $=0$.

These arguments above show the veracity of  (\ref{e5}). This means that  $((u,\theta)(\kappa_n))_{n\in\mathbb{N}}$ is a Cauchy sequence in Banach space $\dot{H}_{a,\sigma}^s$ (since $s<\frac{3}{2}$).
Hence, the limit below holds:
\begin{align}\label{wilber2}
\lim_{n\rightarrow \infty} \|(u,\theta)(\kappa_n)- (u_1,\theta_1)\|_{\dot{H}_{a,\sigma}^s}=0,
\end{align}
for some $(u_1,\theta_1)\in \dot{H}_{a,\sigma}^{s}$.

Now, let us prove that  $(u_1, \theta_1)$ does not rely on 
$(\kappa_n)_{n\in\mathbb{N}}$. In fact, assume that we have another sequence $(\rho_n)_{n\in\mathbb{N}}\subseteq (0,T^*)$ and $(u_2,\theta_2)\in \dot{H}_{a,\sigma}^s$  such that $\rho_n \nearrow T^*$ and
$$\displaystyle\lim_{n\rightarrow \infty} \|(u,\theta)(\rho_n)- (u_2,\theta_2)\|_{\dot{H}_{a,\sigma}^s}=0.$$
Allow us to show that  $(u_2,\theta_2)=(u_1,\theta_1).$ First of all, consider that $(\varsigma_n)_{n\in\mathbb{N}}\subseteq (0,T^*)$ is given by 
$\varsigma_{2n}=\kappa_n$ and $\varsigma_{2n-1}=\rho_n$, for all $n\in\mathbb{N}$. Thus, it is easy to notice that $\varsigma_n\nearrow T^*$
and that there is $(u_3,\theta_3)\in \dot{H}_{a,\sigma}^s$ that satisfies 
$$\displaystyle\lim_{n\rightarrow \infty} \|(u,\theta)(\varsigma_n)- (u_3,\theta_3)\|_{\dot{H}_{a,\sigma}^s}=0,$$
as we have done previously. Thereby, we conclude $(u_1,\theta_1)=(u_3,\theta_3)=(u_2,\theta_2).$ 

At last, (\ref{wilber2}) leads us to 
\begin{align}\label{wilber3}
\displaystyle\lim_{t\nearrow T^*} \|(u,\theta)(t)- (u_1,\theta_1)\|_{\dot{H}_{a,\sigma}^s}=0.
\end{align}
In order to finish this proof, assume that $(\widetilde{u},\widetilde{\theta})\in C_{T}(\dot{H}_{a,\sigma}^s)$ is the solution for the Boussinesq equations (\ref{micropolar}) with the initial data $(u_1,\theta_1)$
(it is enough to apply Theorem \ref{teoremaexistenciaB}). Hence, we can define
$$(\overline{u},\overline{\theta})(t)=\left\{
                                     \begin{array}{ll}
                                       (u,\theta)(t), & t\in [0,T^*); \\
                                       (\widetilde{u},\widetilde{\theta})(t-T^*), & t\in [T^*,T+T^*].
                                     \end{array}
                                   \right.
$$
As a result, by (\ref{wilber3}),  $(\overline{u},\overline{\theta})\in C_{T+T^*}(\dot{H}_{a,\sigma}^s)$ is a solution for the Boussinesq equations (\ref{micropolar}) that is defined beyond $T^*$ with initial data $(u_0,\theta_0)$. This is not possible because of the maximality of $T^*$! Consequently, we can write
\begin{align}\label{wilber5}
\displaystyle \limsup_{t\nearrow T^*} \|(u,\theta)(t)\|_{\dot{H}_{a,\sigma}^{s}} = \infty.
\end{align}
That is, the proof of Theorem   \ref{teoremaB} \textbf{i)} with $n=1$ is established.

\caixa

\hspace{-0.5cm}\textbf{Proof of Theorem \ref{teoremaB} \textbf{ii)} with $n=1$:}
\vspace{0.2cm}

At first, apply Hölder's inequality  to the inequality (\ref{n3}) in order to reach the inequality below:
\begin{align}\label{wilber4}
\nonumber\frac{d}{dt}\|(u,\theta)(t)\|_{\dot{H}_{a,\sigma}^s}^2+\|u(t)\|_{\dot{H}_{a,\sigma}^{s+\alpha}}^2+\|\theta(t)\|_{\dot{H}_{a,\sigma}^{s+\beta}}^2 &\leq C_{s,\alpha,\beta} [\|e^{\frac{a}{\sigma}|\cdot|^{\frac{1}{\sigma}}}(\widehat{u},\widehat{\theta})\|_{L^{1}}^{\frac{2\alpha}{2\alpha-1}}+
\|e^{\frac{a}{\sigma}|\cdot|^{\frac{1}{\sigma}}}(\widehat{u},\widehat{\theta})\|_{L^{1}}^{\frac{2\beta}{2\beta-1}}+1]\\
&\quad\times\|(u,\theta)\|_{\dot{H}_{a,\sigma}^{s}}^{2}.
\end{align}
Secondly, by applying  Gronwall's inequality to (\ref{wilber4}), we can write the following inequality:
\begin{align}\label{wilber24}
\nonumber\|(u,b)(T)\|_{\dot{H}_{a,\sigma}^s}^2&\leq \|(u,b)(t)\|_{\dot{H}_{a,\sigma}^s}^2\exp \{C_{s,\alpha,\beta}\int_t^T [\|e^{\frac{a}{\sigma}|\cdot|^{\frac{1}{\sigma}}}(\widehat{u},\widehat{\theta})\|_{L^{1}}^{\frac{2\alpha}{2\alpha-1}}+
\|e^{\frac{a}{\sigma}|\cdot|^{\frac{1}{\sigma}}}(\widehat{u},\widehat{\theta})\|_{L^{1}}^{\frac{2\beta}{2\beta-1}}+1]\,d\tau \}\\
&\leq \|(u,b)(t)\|_{\dot{H}_{a,\sigma}^s}^2e^{C_{s,\alpha,\beta}(T-t)}\exp \{C_{s,\alpha,\beta}\int_t^T [\|e^{\frac{a}{\sigma}|\cdot|^{\frac{1}{\sigma}}}(\widehat{u},\widehat{\theta})\|_{L^{1}}^{\frac{2\alpha}{2\alpha-1}}+
\|e^{\frac{a}{\sigma}|\cdot|^{\frac{1}{\sigma}}}(\widehat{u},\widehat{\theta})\|_{L^{1}}^{\frac{2\beta}{2\beta-1}}]\,d\tau \},
\end{align}
for all $0\leq t\leq T< T^*$. At last, by taking the limit superior, as $T\nearrow T^*$, the equality (\ref{wilber5}) leads us to
\begin{align}\label{wilber8}
\int_t^{T^*} [\|e^{\frac{a}{\sigma}|\cdot|^{\frac{1}{\sigma}}}(\widehat{u},\widehat{\theta})(\tau)\|_{L^{1}}^{\frac{2\alpha}{2\alpha-1}}+
\|e^{\frac{a}{\sigma}|\cdot|^{\frac{1}{\sigma}}}(\widehat{u},\widehat{\theta})(\tau)\|_{L^{1}}^{\frac{2\beta}{2\beta-1}}]\,d\tau=\infty,
\end{align}
for all $t\in[0,T^*)$. Therefore, the  proof of Theorem  \ref{teoremaB} \textbf{ii)} with $n=1$ is complete.

\caixa

\hspace{-0.5cm}\textbf{Proof of Theorem  \ref{teoremaB} \textbf{iii)} with $n=1$:}\label{secaoesqueci}
\vspace{0.2cm}

By taking Fourier transform in the first equation
of (\ref{micropolar}) and the scalar product  with $\widehat{u}(t)$ of the result obtained, it follows that
\begin{align}\label{wilber25}
 \frac{1}{2}\partial_t|\widehat{u}(t)|^2+|\xi|^{2\alpha}|\widehat{ u}|^2 &\leq
|\widehat{u}\cdot \widehat{u\cdot\nabla \displaystyle u}|+|\widehat{u}_3 \widehat{\theta}|.
\end{align}
Now, let $\delta$ be any positive real number to infer
\begin{align*}
\nonumber \partial_{t}\sqrt{|\widehat{u}(t)|^2+\delta}+\frac{|\xi|^{2\alpha}|\widehat{u}|^2}{\sqrt{|\widehat{u}|^2+\delta}}
&\leq|\widehat{u\cdot\nabla \displaystyle u}|+|\widehat{\theta}|.\
\end{align*}
By integrating the inequality above, one can write the next result: 
\begin{align*}
\nonumber \sqrt{|\widehat{u}(T)|^2+\delta}+ |\xi|^{2\alpha}\int_{t}^{T}\frac{|\widehat{u}(\tau)|^2}{\sqrt{|\widehat{u}(\tau)|^2+\delta}}\,d\tau\leq\sqrt{|\widehat{u}(t)|^2+\delta}
+\int_{t}^{T}[|\widehat{(u\cdot\nabla \displaystyle u)}(\tau)|+|\widehat{\theta}(\tau)|]\,d\tau,
\end{align*}
 where  $0\leq t\leq T<T^{*}<\infty$. Thereby,  take  the limit in the inequality above, as $\delta\searrow 0$ and, after that, multiply by $e^{\frac{a}{\sigma}|\xi|^{\frac{1}{\sigma}}}$ and integrate over $\xi \in\mathbb{R}^3$ the  result obtained to deduce that
\begin{align}\label{wilber6}
 \nonumber\|e^{\frac{a}{\sigma}|\cdot|^{\frac{1}{\sigma}}}\widehat{u}(T)\|_{L^1}+ \int_{t}^{T}\|e^{\frac{a}{\sigma}|\cdot|^{\frac{1}{\sigma}}}\mathcal{F}[(-\Delta)^{\alpha} u](\tau)\|_{L ^1}\,d\tau&\leq \|e^{\frac{a}{\sigma}|\cdot|^{\frac{1}{\sigma}}}\widehat{u}(t)\|_{L^1}\\
&\quad+\int_{t}^{T}\int_{\mathbb{R}^3}e^{\frac{a}{\sigma}|\xi|^{\frac{1}{\sigma}}}[|\widehat{(u\cdot\nabla \displaystyle u)}(\tau)|+|\widehat{\theta}(\tau)|]\,d\xi d\tau.\
\end{align}
By observing the second equation in (\ref{micropolar}) and following analogous arguments as above, one has
\begin{align}\label{wilber7}
 &\|e^{\frac{a}{\sigma}|\cdot|^{\frac{1}{\sigma}}}\widehat{\theta}(T)\|_{L^1}+\int_{t}^{T}\|e^{\frac{a}{\sigma}|\cdot|^{\frac{1}{\sigma}}}\mathcal{F}[(-\Delta)^{\beta} \theta](\tau)\|_{L ^1}\,d\tau\leq \|e^{\frac{a}{\sigma}|\cdot|^{\frac{1}{\sigma}}}\widehat{\theta}(t)\|_{L^1}+\int_{t}^{T}\int_{\mathbb{R}^3}e^{\frac{a}{\sigma}|\xi|^{\frac{1}{\sigma}}}|\widehat{(u\cdot\nabla \displaystyle \theta)}(\tau)|\,d\xi d\tau.\
\end{align}
As a result, by (\ref{wilber6}) and (\ref{wilber7}), we can write
\begin{align}\label{wilber26}
 \nonumber&\|e^{\frac{a}{\sigma}|\cdot|^{\frac{1}{\sigma}}}(\widehat{u},\widehat{\theta})(T)\|_{L^1}+ \int_{t}^{T}\|e^{\frac{a}{\sigma}|\cdot|^{\frac{1}{\sigma}}}\mathcal{F}[(-\Delta)^{\alpha} u](\tau)\|_{L ^1}\,d\tau+ \int_{t}^{T}\|e^{\frac{a}{\sigma}|\cdot|^{\frac{1}{\sigma}}}\mathcal{F}[(-\Delta)^{\beta} \theta](\tau)\|_{L ^1}\,d\tau\\
\nonumber&\leq \|e^{\frac{a}{\sigma}|\cdot|^{\frac{1}{\sigma}}}(\widehat{u},\widehat{\theta})(t)\|_{L^1}
+\int_{t}^{T}\int_{\mathbb{R}^3}e^{\frac{a}{\sigma}|\xi|^{\frac{1}{\sigma}}}[|\widehat{(u\cdot\nabla \displaystyle u)}(\tau)|+|\widehat{\theta}(\tau)|+|\widehat{(u\cdot\nabla \displaystyle \theta)}(\tau)|]\,d\xi d\tau\\
&\leq  \|e^{\frac{a}{\sigma}|\cdot|^{\frac{1}{\sigma}}}(\widehat{u},\widehat{\theta})(t)\|_{L^1}
+\int_{t}^{T}\int_{\mathbb{R}^3}e^{\frac{a}{\sigma}|\xi|^{\frac{1}{\sigma}}}[|\widehat{(u\cdot\nabla \displaystyle u)}(\tau)|+|\widehat{(u\cdot\nabla \displaystyle \theta)}(\tau)|]\,d\xi d\tau+\int_{t}^{T}\|e^{\frac{a}{\sigma}|\cdot|^{\frac{1}{\sigma}}}\widehat{\theta}(\tau)\|_{L^1} d\tau.
\end{align}
Lemma \ref{benameur} (recall that $a>0$ and $\sigma>1$) and Young and Hölder's (recall that $\beta\geq1$) inequalities imply that
\begin{align*}
\nonumber\int_{\mathbb{R}^3}e^{\frac{a}{\sigma}|\xi|^{\frac{1}{\sigma}}}|\widehat{(u\cdot\nabla \displaystyle \theta)}(\xi)|\,d\xi&\leq
(2\pi)^{-3}\|[e^{\frac{a}{\sigma}|\cdot|^{\frac{1}{\sigma}}}|\widehat{u}|]\ast[e^{\frac{a}{\sigma}|\cdot|^{\frac{1}{\sigma}}}|\widehat{ \nabla \theta}|]\|_{L^1}\leq   (2\pi)^{-3}\|e^{\frac{a}{\sigma}|\cdot|^{\frac{1}{\sigma}}}\widehat{u}\|_{L^1} \|e^{\frac{a}{\sigma}|\cdot|^{\frac{1}{\sigma}}}\widehat{ \nabla \theta}\|_{L^1}\\
&\leq (2\pi)^{-3}\|e^{\frac{a}{\sigma}|\cdot|^{\frac{1}{\sigma}}}\widehat{u}\|_{L^1}\|e^{\frac{a}{\sigma}|\cdot|^{\frac{1}{\sigma}}}\widehat{ \theta}\|_{L^1}^{1-\frac{1}{2\beta}} \|e^{\frac{a}{\sigma}|\cdot|^{\frac{1}{\sigma}}}\mathcal{F}[(-\Delta)^{\beta} \theta]\|_{L^1}^{\frac{1}{2\beta}}.
\end{align*}
Similarly, one infers (recall that $a>0$, $\sigma>1$ and $\alpha\geq1$)
\begin{align*}
\nonumber\int_{\mathbb{R}^3}e^{\frac{a}{\sigma}|\xi|^{\frac{1}{\sigma}}}|\widehat{(u\cdot\nabla \displaystyle u)}(\xi)|\,d\xi\leq (2\pi)^{-3}\|e^{\frac{a}{\sigma}|\cdot|^{\frac{1}{\sigma}}}\widehat{u}\|_{L^1}\|e^{\frac{a}{\sigma}|\cdot|^{\frac{1}{\sigma}}}\widehat{ u}\|_{L^1}^{1-\frac{1}{2\alpha}} \|e^{\frac{a}{\sigma}|\cdot|^{\frac{1}{\sigma}}}\mathcal{F}[(-\Delta)^{\alpha} u]\|_{L^1}^{\frac{1}{2\alpha}}.
\end{align*}
Consequently, by Young's inequality, one deduces
\begin{align}\label{wilber27}
 \nonumber&\|e^{\frac{a}{\sigma}|\cdot|^{\frac{1}{\sigma}}}(\widehat{u},\widehat{\theta})(T)\|_{L^1}+\frac{1 }{2} \int_{t}^{T}\|e^{\frac{a}{\sigma}|\cdot|^{\frac{1}{\sigma}}}\mathcal{F}[(-\Delta)^{\alpha} u](\tau)\|_{L ^1}\,d\tau+\frac{1 }{2} \int_{t}^{T}\|e^{\frac{a}{\sigma}|\cdot|^{\frac{1}{\sigma}}}\mathcal{F}[(-\Delta)^{\beta} \theta](\tau)\|_{L ^1}\,d\tau\\
\nonumber &\leq \|e^{\frac{a}{\sigma}|\cdot|^{\frac{1}{\sigma}}}(\widehat{u},\widehat{\theta})(t)\|_{L^1}
+C_{\alpha,\beta}\int_{t}^{T}\|e^{\frac{a}{\sigma}|\cdot|^{\frac{1}{\sigma}}} (\widehat{u},\widehat{\theta})(\tau)\|_{L^1}
\{\|e^{\frac{a}{\sigma}|\cdot|^{\frac{1}{\sigma}}} (\widehat{u},\widehat{\theta})(\tau)\|_{L^1}^{\frac{2\alpha}{2\alpha-1}}+\|e^{\frac{a}{\sigma}|\cdot|^{\frac{1}{\sigma}}} (\widehat{u},\widehat{\theta})(\tau)\|_{L^1}^{\frac{2\beta}{2\beta-1}}\}\, d\tau\\
\nonumber&\quad+\int_{t}^{T}\|e^{\frac{a}{\sigma}|\cdot|^{\frac{1}{\sigma}}}\widehat{\theta}(\tau)\|_{L^1} d\tau\\
&\leq\|e^{\frac{a}{\sigma}|\cdot|^{\frac{1}{\sigma}}}(\widehat{u},\widehat{\theta})(t)\|_{L^1}
+C_{\alpha,\beta}\int_{t}^{T}\|e^{\frac{a}{\sigma}|\cdot|^{\frac{1}{\sigma}}} (\widehat{u},\widehat{\theta})(\tau)\|_{L^1}
\{\|e^{\frac{a}{\sigma}|\cdot|^{\frac{1}{\sigma}}} (\widehat{u},\widehat{\theta})(\tau)\|_{L^1}^{\frac{2\alpha}{2\alpha-1}}+\|e^{\frac{a}{\sigma}|\cdot|^{\frac{1}{\sigma}}} (\widehat{u},\widehat{\theta})(\tau)\|_{L^1}^{\frac{2\beta}{2\beta-1}}+1\}\, d\tau.
\end{align}
Now, we are able to apply Gronwall's inequality (with $0\leq t\leq T< T^*<\infty$ as previously) in order to obtain
\begin{align}\label{wilber28}
 \nonumber&\|e^{\frac{a}{\sigma}|\cdot|^{\frac{1}{\sigma}}}(\widehat{u},\widehat{\theta})(T)\|_{L^1}^{\frac{2\alpha}{2\alpha-1}}+
\|e^{\frac{a}{\sigma}|\cdot|^{\frac{1}{\sigma}}}(\widehat{u},\widehat{\theta})(T)\|_{L^1}^{\frac{2\beta}{2\beta-1}}\leq \{\|e^{\frac{a}{\sigma}|\cdot|^{\frac{1}{\sigma}}}(\widehat{u},\widehat{\theta})(t)\|_{L^1}^{\frac{2\alpha}{2\alpha-1}}+
\|e^{\frac{a}{\sigma}|\cdot|^{\frac{1}{\sigma}}}(\widehat{u},\widehat{\theta})(t)\|_{L^1}^{\frac{2\beta}{2\beta-1}}\}\\
\nonumber&\quad\times\exp\{C_{\alpha,\beta}\int_{t}^{T}\{\|e^{\frac{a}{\sigma}|\cdot|^{\frac{1}{\sigma}}} (\widehat{u},\widehat{\theta})(\tau)\|_{L^1}^{\frac{2\alpha}{2\alpha-1}}+\|e^{\frac{a}{\sigma}|\cdot|^{\frac{1}{\sigma}}} (\widehat{u},\widehat{\theta})(\tau)\|_{L^1}^{\frac{2\beta}{2\beta-1}}+1\}\, d\tau\}\\
\nonumber&\leq \{\|e^{\frac{a}{\sigma}|\cdot|^{\frac{1}{\sigma}}}(\widehat{u},\widehat{\theta})(t)\|_{L^1}^{\frac{2\alpha}{2\alpha-1}}+
\|e^{\frac{a}{\sigma}|\cdot|^{\frac{1}{\sigma}}}(\widehat{u},\widehat{\theta})(t)\|_{L^1}^{\frac{2\beta}{2\beta-1}}\}\\
&\quad\times e^{C_{\alpha,\beta}(T-t)}\exp\{C_{\alpha,\beta}\int_{t}^{T}\{\|e^{\frac{a}{\sigma}|\cdot|^{\frac{1}{\sigma}}} (\widehat{u},\widehat{\theta})(\tau)\|_{L^1}^{\frac{2\alpha}{2\alpha-1}}+\|e^{\frac{a}{\sigma}|\cdot|^{\frac{1}{\sigma}}} (\widehat{u},\widehat{\theta})(\tau)\|_{L^1}^{\frac{2\beta}{2\beta-1}}\}\, d\tau\}.
\end{align}
Hence, we can write the following inequality:
\begin{align}\label{wilber29}
 \nonumber&-C_{\alpha,\beta}^{-1}\frac{d}{dT}[\exp\{-C_{\alpha,\beta}\int_{t}^{T}\{\|e^{\frac{a}{\sigma}|\cdot|^{\frac{1}{\sigma}}} (\widehat{u},\widehat{\theta})(\tau)\|_{L^1}^{\frac{2\alpha}{2\alpha-1}}+\|e^{\frac{a}{\sigma}|\cdot|^{\frac{1}{\sigma}}} (\widehat{u},\widehat{\theta})(\tau)\|_{L^1}^{\frac{2\beta}{2\beta-1}}\}\,d\tau\}]\\
&\leq e^{C_{\alpha,\beta}(T-t)}\{\|e^{\frac{a}{\sigma}|\cdot|^{\frac{1}{\sigma}}}(\widehat{u},\widehat{\theta})(t)\|_{L^1}^{\frac{2\alpha}{2\alpha-1}}+
\|e^{\frac{a}{\sigma}|\cdot|^{\frac{1}{\sigma}}}(\widehat{u},\widehat{\theta})(t)\|_{L^1}^{\frac{2\beta}{2\beta-1}}\}. 
\end{align}
By integrating over $[t,t_0]$ the inequality above, one concludes
\begin{align}\label{wilber30}
 \nonumber&-C_{\alpha,\beta}^{-1}\exp\left(-C_{\alpha,\beta}\int_{t}^{t_0}\left(\|e^{\frac{a}{\sigma}|\cdot|^{\frac{1}{\sigma}}} (\widehat{u},\widehat{\theta})(\tau)\|_{L^1}^{\frac{2\alpha}{2\alpha-1}}+\|e^{\frac{a}{\sigma}|\cdot|^{\frac{1}{\sigma}}} (\widehat{u},\widehat{\theta})(\tau)\|_{L^1}^{\frac{2\beta}{2\beta-1}}\right)\,d\tau\right)+C_{\alpha,\beta}^{-1}\\
&\leq [C_{\alpha,\beta}^{-1}e^{C_{\alpha,\beta}(t_0-t)}-C_{\alpha,\beta}^{-1}]\left(\|e^{\frac{a}{\sigma}|\cdot|^{\frac{1}{\sigma}}}(\widehat{u},\widehat{\theta})(t)\|_{L^1}^{\frac{2\alpha}{2\alpha-1}}+
\|e^{\frac{a}{\sigma}|\cdot|^{\frac{1}{\sigma}}}(\widehat{u},\widehat{\theta})(t)\|_{L^1}^{\frac{2\beta}{2\beta-1}}\right). 
\end{align}
where $0\leq t\leq t_0< T^*$. Thus, we are ready to take the limit in the inequality above, as $t_0 \nearrow T^*$, and apply (\ref{wilber8}) in order to reach
\begin{align}\label{wilber31}
C_{\alpha,\beta}^{-1}\leq   [C_{\alpha,\beta}^{-1}e^{C_{\alpha,\beta}(T^*-t)}-C_{\alpha,\beta}^{-1}]\{\|e^{\frac{a}{\sigma}|\cdot|^{\frac{1}{\sigma}}}(\widehat{u},\widehat{\theta})(t)\|_{L^1}^{\frac{2\alpha}{2\alpha-1}}+
\|e^{\frac{a}{\sigma}|\cdot|^{\frac{1}{\sigma}}}(\widehat{u},\widehat{\theta})(t)\|_{L^1}^{\frac{2\beta}{2\beta-1}}\},
\end{align}
for all $t\in [0,T^*)$. 
As a result, one infers
\begin{align}\label{wilber9}
\|e^{\frac{a}{\sigma}|\cdot|^{\frac{1}{\sigma}}}(\widehat{u},\widehat{\theta})(t)\|_{L^1}^{\frac{2\alpha}{2\alpha-1}}+
\|e^{\frac{a}{\sigma}|\cdot|^{\frac{1}{\sigma}}}(\widehat{u},\widehat{\theta})(t)\|_{L^1}^{\frac{2\beta}{2\beta-1}}\geq  [e^{C_{\alpha,\beta}(T^*-t)}-1]^{-1},
\end{align}
for all $t\in [0,T^*)$. Therefore, the proof of Theorem \ref{teoremaB} \textbf{iii)}  for $n=1$ is established.

\caixa

\hspace{-0.5cm}\textbf{Proof of Theorem \ref{teoremaB} \textbf{iv)} with $n=1$:}\label{secaoteoremaexistenciaB}
\vspace{0.2cm}

First of all, notice that  $\dot{H}_{a,\sigma}^s\hookrightarrow \dot{H}_{\frac{a}{\sqrt{\sigma}},\sigma}^s$
(since $\sigma>  1$). Actually, more precisely, the inequality $\displaystyle\|\cdot\|_{\dot{H}_{\frac{a}{\sqrt{\sigma}},\sigma}^s}\leq \|\cdot\|_{\dot{H}_{a,\sigma}^s}$ holds. As a result, we have$\footnote{From now on $T^*_{\omega}< \infty$ denotes the first blow-up time for the solution $(u,\theta)\in C([0,T^*_{\omega}); \dot{H}_{\omega,\sigma}^{s})$, where $\omega>0.$}$ $(u,\theta)\in C([0,T_{a}^*),$ $\dot{H}_{\frac{a}{\sqrt{\sigma}},\sigma}^s)$ (since $(u,\theta)\in C([0,T_{a}^*),$ $\dot{H}_{a,\sigma}^s)$) and, as a direct consequence, 
\begin{align}\label{wilber10}
T_{\frac{a}{\sqrt{\sigma}}}^*\geq T_a^*.
\end{align}
Furthermore, we can apply (\ref{wilber9}) and Cauchy-Schwarz's inequality  to conclude
\begin{align*}
\nonumber [e^{C_{\alpha,\beta}(T^{*}_a-t)}-1]^{-1}&\leq  \|e^{\frac{a}{\sigma}|\cdot|^{\frac{1}{\sigma}}} (\widehat{u},\widehat{\theta})(t)\|_{L^1}^{\frac{2\alpha}{2\alpha-1}}+\|e^{\frac{a}{\sigma}|\cdot|^{\frac{1}{\sigma}}} (\widehat{u},\widehat{\theta})(t)\|_{L^1}^{\frac{2\beta}{2\beta-1}}\\
&\leq C_{a,\sigma,s,\alpha,\beta}[\|(u,\theta)(t)\|_{\dot{H}_{\frac{a}{\sqrt{\sigma}},\sigma}^s}^{\frac{2\alpha}{2\alpha-1}}+\|(u,\theta)(t)\|_{\dot{H}_{\frac{a}{\sqrt{\sigma}},\sigma}^s}^{\frac{2\beta}{2\beta-1}}],
\end{align*}
for all $t\in [0,T_a^*)$ (recall that $\sigma>1$, $a>0$ and $s<\frac{3}{2}$). This is equivalent to
\begin{align}\label{i)n=1}
 \|(u,\theta)(t)\|_{\dot{H}_{\frac{a}{\sqrt{\sigma}},\sigma}^s}^{\frac{2\alpha}{2\alpha-1}}
+\|(u,\theta)(t)\|_{\dot{H}_{\frac{a}{\sqrt{\sigma}},\sigma}^s}^{\frac{2\beta}{2\beta-1}}\geq C_{a,\sigma,s,\alpha,\beta}[e^{C_{\alpha,\beta}(T^{*}_a-t)}-1]^{-1},
\end{align}
for all $t\in [0,T_a^*)$. Therefore, the proof of Theorem  \ref{teoremaB} \textbf{iv)} with $n=1$ is given.

\caixa

\hspace{-0.5cm}\textbf{Proof of Theorem  \ref{teoremaB}  with $n>1$:}\label{secaoteoremaexistenciaBnew}
\vspace{0.2cm}

Notice that, it follows directly from (\ref{i)n=1})  that
\begin{align*}
\limsup_{t\nearrow T_a^*}[\|(u,\theta)(t)\|_{\dot{H}_{\frac{a}{\sqrt{\sigma}},\sigma}^s}^{\frac{2\alpha}{2\alpha-1}}+\|(u,\theta)(t)\|_{\dot{H}_{\frac{a}{\sqrt{\sigma}},\sigma}^s}^{\frac{2\beta}{2\beta-1}}]=\infty,
\end{align*}
which implies, by a contradiction argument, the following limit:
\begin{align}\label{esqueci2}
\limsup_{t\nearrow T_a^*}\|(u,\theta)(t)\|_{\dot{H}_{\frac{a}{\sqrt{\sigma}},\sigma}^s}=\infty.
\end{align}
This result establishes the proof of Theorem \ref{teoremaB} \textbf{i)} with $n=2$. 

Therefore, we can restart and follow the steps of the proof of (\ref{wilber8}) and (\ref{wilber9}) and infer
$$\int_t^{T_a^*} [\|e^{\frac{a}{\sigma\sqrt{\sigma}}|\cdot|^{\frac{1}{\sigma}}} (\widehat{u},\widehat{\theta})(\tau)\|_{L^1}^{\frac{2\alpha}{2\alpha-1}}+\|e^{\frac{a}{\sigma\sqrt{\sigma}}|\cdot|^{\frac{1}{\sigma}}} (\widehat{u},\widehat{\theta})(\tau)\|_{L^1}^{\frac{2\beta}{2\beta-1}}]\,d\tau=\infty,\quad \forall t\in[0,T_a^*),$$
and also
\begin{align*}
\|e^{\frac{a}{\sigma\sqrt{\sigma}}|\cdot|^{\frac{1}{\sigma}}}(\widehat{u},\widehat{\theta})(t)\|_{L^1}^{\frac{2\alpha}{2\alpha-1}}+
\|e^{\frac{a}{\sigma\sqrt{\sigma}}|\cdot|^{\frac{1}{\sigma}}}(\widehat{u},\widehat{\theta})(t)\|_{L^1}^{\frac{2\beta}{2\beta-1}}\geq  [e^{C_{\alpha,\beta}(T^*_a-t)}-1]^{-1},\quad \forall t\in [0,T^*_a).
\end{align*}
 Consequently, the proof of Theorem  \ref{teoremaB} \textbf{ii)} and \textbf{iii)} with $n=2$ are given. In addition,  (\ref{esqueci2}) also implies that $T_a^*\geq T_{\frac{a}{\sqrt{\sigma}}}^*.$ Hence, by (\ref{wilber10}), one concludes 
\begin{align}\label{wilber11}
T_a^*=T_{\frac{a}{\sqrt{\sigma}}}^*.
\end{align}

Now, by (\ref{i)n=1}) (with $\frac{a}{\sqrt{\sigma}}$ replacing $a$),  one deduces
\begin{align*}
 \|(u,\theta)(t)\|_{\dot{H}_{\frac{a}{\sigma},\sigma}^s}^{\frac{2\alpha}{2\alpha-1}}
+\|(u,\theta)(t)\|_{\dot{H}_{\frac{a}{\sigma},\sigma}^s}^{\frac{2\beta}{2\beta-1}}\geq C_{a,\sigma,s,\alpha,\beta}[e^{C_{\alpha,\beta}(T^{*}_{\frac{a}{\sqrt{\sigma}}}-t)}-1]^{-1},
\end{align*}
for all $t\in [0,T_{\frac{a}{\sqrt{\sigma}}}^*)$ and, by (\ref{wilber11}), it follows that
\begin{align}\label{i)n=2}
 \|(u,\theta)(t)\|_{\dot{H}_{\frac{a}{\sigma},\sigma}^s}^{\frac{2\alpha}{2\alpha-1}}
+\|(u,\theta)(t)\|_{\dot{H}_{\frac{a}{\sigma},\sigma}^s}^{\frac{2\beta}{2\beta-1}}\geq C_{a,\sigma,s,\alpha,\beta}[e^{C_{\alpha,\beta}(T^{*}_{a}-t)}-1]^{-1},
\end{align}
for all $t\in [0,T_{a}^*)$. Therefore, the inequality (\ref{i)n=2}) proves Theorem  \ref{teoremaB} \textbf{iv)} with $n=2$. This means that Theorem  \ref{teoremaB} is proved for $n=2$.

Lastly, it is enough take the limit in the inequality (\ref{i)n=2}), as   $t\nearrow T^*_a$, to conclude that
$$\displaystyle \limsup_{t\nearrow T_a^*}\|(u,\theta)(t)\|_{\dot{H}_{\frac{a}{\sigma},\sigma}^s(\mathbb{R}^3)}=\infty$$
and, as a result, we are able to reestablish the process above and obtain, inductively, the proof of Theorem \ref{teoremaB} for any $n\in \mathbb{N}$.

\caixa

\subsection{Proof of Corollary \ref{corolario}}


Let us apply Dominated Convergence Theorem to Theorem \ref{teoremaB} iii) (recall that $\sigma>1$), with $\alpha=\beta$, in order to deduce
\begin{align}\label{corolarioH}
\|(\widehat{u},\widehat{\theta})(t)\|_{L^1}\geq  C'_{\alpha}[e^{C_{\alpha}(T^*-t)}-1]^{-\frac{2\alpha-1}{2\alpha}},\quad\forall t\in [0,T^*).
\end{align}

By the use of Lemma \ref{benameur1} (recall that $s\geq0$), with $k\in \mathbb{N}$ and $k\geq 2\sigma\mu$ (where $\mu>\frac{3}{2}$ is a constant), Lemma \ref{lemalorenz1} (recall that $a>0$, $\sigma>1$ and $s\in[0,\frac{3}{2})$)   and the inequality (\ref{corolarioH}), one has
\begin{align}\label{wilber12}
C'_{\alpha}[e^{C_{\alpha}(T^*-t)}-1]^{-\frac{2\alpha-1}{2\alpha}}\leq   C_s \|(u,\theta)(t)\|_{L^2}^{1-\frac{3}{2(s+\frac{k}{2\sigma})}}\|(u,\theta)(t)\|_{\dot{H}^{s+\frac{k}{2\sigma}} }^{\frac{3}{2(s+\frac{k}{2\sigma})}}, \quad \forall t\in[0,T^*).
\end{align}

On the other hand, take  $L^2$-inner product of the Boussinesq equations (\ref{micropolar}), with $u$ and $\theta$, respectively, and integrate the results obtained over $[0,t]$ in order to be capable of writing the following inequality:
\begin{align}\label{wilber32}
\frac{1}{2}\frac{d}{dt}\|(u,\theta)(t)\|_{L^2}^2+\|(-\Delta)^{\frac{\alpha}{2}}u(t)\|_{L^2}^2+\|(-\Delta)^{\frac{\beta}{2}}\theta(t)\|_{L^2}^2 &\leq \|(u,\theta)(t)\|_{L^2}^{2},\quad\forall t\in [0,T^*).
\end{align}
By Gronwall's Lemma, we have 
\begin{align}\label{normal2}
 \|(u,\theta)(t)\|_{L^2}\leq e^{T^*}\|(u_0,\theta_0)\|_{L^2},\quad\forall t\in [0,T^*).
\end{align}
Thereby, by using (\ref{wilber12}) and (\ref{normal2}), one infers
\begin{align*}
\frac{C_{s,\alpha,u_0,\theta_0,T^*}}{[e^{C_\alpha(T^*-t)}-1]^{\frac{2s(2\alpha-1)}{3\alpha}}}\left(\frac{C'_{\sigma,s,\alpha,u_0,\theta_0,T^*}}{[e^{C_\alpha(T^*-t)}-1]^{\frac{2\alpha-1}{3\alpha\sigma}}}\right)^k \leq \|(u,\theta)(t)\|_{\dot{H}^{s+\frac{k}{2\sigma}}}^2,\quad\forall t\in [0,T^*).
\end{align*}
As a consequence, we have the inequality below:
\begin{align}\label{wnovo1}
\frac{C_{s,\alpha,u_0,\theta_0,T^*}}{[e^{C_\alpha(T^*-t)}-1]^{\frac{2s(2\alpha-1)}{3\alpha}}}\frac{\left(\frac{2aC'_{\sigma,s,\alpha,u_0,\theta_0,T^*}}{[e^{C_\alpha(T^*-t)}-1]^{\frac{2\alpha-1}{3\alpha\sigma}}}\right)^k }{k!} &\leq  \int_{\mathbb{R}^3} \frac{(2a|\xi|^{\frac{1}{\sigma}})^k}{k!}|\xi|^{2s}|(\widehat{u},\widehat{\theta})(t)|^{2}\,d\xi,\quad\forall t\in [0,T^*).
\end{align}
Hence, by using Monotone Convergence Theorem in the inequality (\ref{wnovo1}), we deduce
\begin{align}\label{wilber14}
\nonumber\frac{C_{s,\alpha,u_0,\theta_0,T^*}}{[e^{C_\alpha(T^*-t)}-1]^{\frac{2s(2\alpha-1)}{3\alpha}}}\sum_{k=2\sigma_0+1}^{\infty}\frac{\left(\frac{2aC'_{\sigma,s,\alpha,u_0,\theta_0,T^*}}{[e^{C_\alpha(T^*-t)}-1]^{\frac{2\alpha-1}{3\alpha\sigma}}}\right)^k }{k!} &\leq  \int_{\mathbb{R}^3} \sum_{k=2\sigma_0+1}^{\infty}\frac{(2a|\xi|^{\frac{1}{\sigma}})^k}{k!}|\xi|^{2s}|(\widehat{u},\widehat{\theta})(t)|^{2}\,d\xi\\
&\leq \|(u,\theta)(t)\|_{\dot{H}_{a,\sigma}^s}^2,
\end{align}
for all $t\in [0,T^*)$, where $2\sigma_0$ is the integer part of $2\sigma\mu$.

On the other hand, it is elementary to observe that the continuous function $f$ defined by
$$f(x)=x^{-(2\sigma_0+1)}e^{-\frac{x}{2}}\left[\displaystyle\sum_{k=2\sigma_0+1}^{\infty}\frac{x^k}{k!}\right], \quad\forall\, x>0,$$
satisfies the  inequality
\begin{align}\label{wilber13}
f(x)\geq C_{\sigma_0},\quad \forall x>0,
\end{align}
for some positive constant $C_{\sigma_0}$. Thereby, by applying  (\ref{wilber13}) to (\ref{wilber14}), one obtains
\begin{align}\label{wilber33}
\|(u,\theta)(t)\|_{\dot{H}_{a,\sigma}^s}^2&\geq
\frac{a^{2\sigma_0+1}C_{s,\alpha,\sigma,\sigma_0,u_0,\theta_0,T^*}}{[e^{C_\alpha(T^*-t)}-1]^{\frac{(2\alpha-1)[2(s\sigma+\sigma_0)+1]}{3\alpha\sigma}}}\exp\left\{\frac{aC'_{\sigma,s,\alpha,u_0,\theta_0,T^*}}{[e^{C_\alpha(T^*-t)}-1]^{\frac{2\alpha-1}{3\alpha\sigma}}}\right\},\quad\forall t\in [0,T^*).
\end{align}
This completes the proof of Corollary \ref{corolario}.

\caixa

\noindent \textbf{Data Availability Statement:} This manuscript has no associated data.





\begin{thebibliography}{10}














































































































































\bibitem{Be14}
J.~Benameur.
\newblock On the exponential type explosion of Navier--Stokes equations.
\newblock {\em Nonlinear Analysis}, 103:87--97, 2014.

\bibitem{MR3504420}
J.~Benameur, and L.~Jlali.
\newblock Long time decay for 3{D} {N}avier-{S}tokes equations in
  {S}obolev-{G}evrey spaces.
\newblock {\em Electron. J. Differential Equations}, pages Paper No. 104, 13,
  2016.

\bibitem{MR2245751}
H.~Berestycki, P.~Constantin, and L.~Ryzhik.
\newblock Non-planar fronts in {B}oussinesq reactive flows.
\newblock {\em Ann. Inst. H. Poincar\'{e} C Anal. Non Lin\'{e}aire},
  23(4):407--437, 2006.

\bibitem{MR2265624}
J.~L. Bona, Z.~Gruji\'{c}, and H.~Kalisch.
\newblock Global solutions of the derivative {S}chr\"{o}dinger equation in a
  class of functions analytic in a strip.
\newblock {\em J. Differential Equations}, 229(1):186--203, 2006.

\bibitem{cilon1}
L.~Brandolese, and M.~E., Schonbek
\newblock Large time decay and growth for solutions of a viscous Boussinesq system.
\newblock {\em Trans. Amer. Math. Soc.}, 364:5057--5090, 2012.




\bibitem{MR2540168}
Y.~Brenier.
\newblock Optimal transport, convection, magnetic relaxation and generalized
  {B}oussinesq equations.
\newblock {\em J. Nonlinear Sci.}, 19(5):547--570, 2009.

\bibitem{pontofixo}
M.~Cannone.
\newblock Harmonic analysis tools for solving the incompressible Navier--Stokes
  equations.
\newblock In {\em Handbook of Mathematical Fluid Dynamics}, volume~3, pages
  161--244. Elsevier, 2004.

\bibitem{MR2379269}
M.~Caputo.
\newblock Linear models of dissipation whose {$Q$} is almost frequency
  independent. {II}.
\newblock {\em Fract. Calc. Appl. Anal.}, 11(1):4--14, 2008.
\newblock Reprinted from Geophys. J. R. Astr. Soc. {{\bf{1}}3} (1967), no. 5,
  529--539.

\bibitem{MR3759571}
D.~Fang, W.~Le, and T.~Zhang.
\newblock The 2{D} regularized incompressible {B}oussinesq equations with
  general critical dissipations.
\newblock {\em J. Math. Anal. Appl.}, 461(1):868--915, 2018.

\bibitem{MR1026858}
C.~Foias, and R.~Temam.
\newblock Gevrey class regularity for the solutions of the {N}avier-{S}tokes
  equations.
\newblock {\em J. Funct. Anal.}, 87(2):359--369, 1989.

\bibitem{Gill1982}
A.~E. Gill.
\newblock {\em Atmosphere-Ocean Dynamics}, volume 30, suppl. C of {\em
  International Geophysics}.
\newblock Academic Press, Cambridge, MA; London, UK, 1982.


\bibitem{wilberr5}
R.~Guterres, W.~G. Melo, J.~Nunes, and C.~Perusato.
\newblock Large time decay for the magnetohydrodynamics equations in Sobolev-Gevrey spaces.
\newblock {\em Monatsh. Math.}, 192:591-613, 2020.


\bibitem{robert}
R.~Guterres, W.~G. Melo, N.~F. Rocha, and T.~S.~R. Santos.
\newblock Well-posedness, blow-up criteria and stability for solutions of the
  generalized MHD equations in Sobolev--Gevrey spaces.
\newblock {\em Acta Applicandae Mathematicae}, 176, 2021.

\bibitem{HormanderALPDO1}
L.~H{\"o}rmander.
\newblock {\em The Analysis of Linear Partial Differential Operators I:
  Distribution Theory and Fourier Analysis}, volume 256 of {\em Grundlehren der
  Mathematischen Wissenschaften}.
\newblock Springer-Verlag, Berlin, 2 edition, 1990.


\bibitem{PN}
L.~Lorenz, and P.~R. Zingano.
\newblock Properties at potential blow-up times for the incompressible
  Navier--Stokes equations.
\newblock {\em Boletim da Sociedade Paranaense de Matemática}, 35:127--158,
  2017.


\bibitem{lorenz}
L.~Lorenz, W.~G. Melo, and N.~F. Rocha.
\newblock The Magneto-Hydrodynamic equations: local theory and blow-up of
  solutions.
\newblock {\em Discrete and Continuous Dynamical Systems Series B},
  24:3819--3841, 2019.

  






\bibitem{wilberr7}
W.~G. Melo, N.~F. Rocha, and E.~Barbosa.
\newblock Mathematical theory of incompressible flows: local existence, uniqueness, and blow-Up of solutions in Sobolev-Gevrey spaces.
\newblock Current Trends in Mathematical Analysis and Its Interdisciplinary Applications.
\newblock {\em Springer International Publishing}, 1ed.: 311-349,  2019.




\bibitem{wilberr4}
W.~G. Melo, N.~F. Rocha, and P.~R.~Zingano. 
\newblock Local existence, uniqueness and lower bounds of solutions for the magnetohydrodynamics equations in Sobolev-Gevrey spaces.
\newblock {\em J. Math. Anal. Appl.}, 482:123524, 31 pp. 2020.


  
\bibitem{wilberr3}
W.~G. Melo, N.~F. Rocha, and P.~R.~Zingano. 
\newblock Asymptotic behavior of solutions for the 2D micropolar equations in Sobolev-Gevrey spaces.
\newblock {\em Asymptot. Anal.}, 123:157–179, 2021.
  
  
  
\bibitem{wilberr2}
W.~G. Melo, and T.~S.~R. Santos.
\newblock Time decay rates for the generalized MHD-$\alpha$ equations in Sobolev-Gevrey spaces.
\newblock {\em Appl. Anal.}, 101:6623–6644, 2022.

  


  
  

\bibitem{MR4632081}
W.~G.~Melo, T.~S.~R.~Santos, and N.~dos Santos~Costa.
\newblock Existence of solutions and their behavior for the anisotropic
  quasi-geostrophic equation in {S}obolev and {S}obolev-{G}evrey spaces.
\newblock {\em J. Math. Anal. Appl.}, 530(1):Paper No. 127661, 29, 2024.



\bibitem{wilberr6}
W.~G. Melo, N.~dos Santos~Costa. and T.~S.~R.~Santos
\newblock Global solutions for the Navier-Stokes equations with fractional dissipation in nonhomogeneous Sobolev-Gevrey spaces.
\newblock Methods of Mathematical Modeling.
\newblock {\em Elsevier.}, 1ed., 81-101, 2025.




\bibitem{MR2169876}
M.~Petcu, and D.~Wirosoetisno.
\newblock Sobolev and {G}evrey regularity results for the primitive equations
  in three space dimensions.
\newblock {\em Appl. Anal.}, 84(8):769--788, 2005.

\bibitem{MR3008326}
H.~Qiu, Y.~Du, and Z.~Yao.
\newblock Local existence and blow-up criterion for the generalized
  {B}oussinesq equations in {B}esov spaces.
\newblock {\em Math. Methods Appl. Sci.}, 36(1):86--98, 2013.

\bibitem{MR4884564}
A.~Stefanov, J.~Wu, X.~Xu, and Z.~Ye.
\newblock Global regularity results of the 2{D} fractional {B}oussinesq
  equations.
\newblock {\em Math. Ann.}, 391(4):5965--6012, 2025.

\bibitem{MR4369830}
X.~Sun, and J.~Liu.
\newblock Long time decay of the fractional {N}avier-{S}tokes equations in
  {S}obolev-{G}every spaces.
\newblock {\em J. Nonlinear Evol. Equ. Appl.}, pages Paper No. 6, 119--135,
  2021.

\bibitem{MR3808344}
J.~Wu, X.~Xu, and Z.~Ye.
\newblock The 2{D} {B}oussinesq equations with fractional horizontal
  dissipation and thermal diffusion.
\newblock {\em J. Math. Pures Appl. (9)}, 115:187--217, 2018.

\bibitem{MR4795499}
B.~Xu, and J.~Zhou.
\newblock Global well-posedness of the 3{D} generalized {B}oussinesq equations.
\newblock {\em Discrete Contin. Dyn. Syst. Ser. B}, 29(12):4821--4829, 2024.

\bibitem{MR3503190}
Z.~Ye, and X.~Xu.
\newblock Global regularity results of the 2{D} {B}oussinesq equations with
  fractional {L}aplacian dissipation.
\newblock {\em J. Math. Fluid Mech.}, 18(2):361--380, 2016.













































\end{thebibliography}
\end{document}